\newtheorem{theorem}{Theorem}[section]
\theoremstyle{definition}
\newtheorem{definition}{Definition}[section]
\newtheorem{lemma}{Lemma}[section]
\newtheorem{proposition}{Proposition}[section]
\newtheorem{corollary}{Corollary}[section]
\theoremstyle{remark}
\newtheorem{remark}{Remark}[section]
\newtheorem{example}{Example}[section]
\renewcommand{\labelenumi}{(\roman{enumi})}
\begin{document}

\title{Accelerating the DC algorithm for smooth functions}

\author{Francisco J. Aragón Artacho%
\thanks{Department of Mathematics, University of Alicante,
Spain. E-mail: \protect\protect\href{mailto:francisco.aragon@ua.es}{francisco.aragon@ua.es}%
} \and Ronan M.T. Fleming%
\thanks{Systems Biochemistry Group, Luxembourg Centre for Systems Biomedicine,
University of Luxembourg, Campus Belval, 4362 Esch-sur-Alzette, Luxembourg.
E-mail: \protect\protect\href{mailto:ronan.mt.fleming@gmail.com}{ronan.mt.fleming@gmail.com}%
} \and Phan T. Vuong %
\thanks{Systems Biochemistry Group, Luxembourg Centre for Systems Biomedicine,
University of Luxembourg, Campus Belval, 4362 Esch-sur-Alzette, Luxembourg.
E-mail: \protect\protect\href{mailto:vuongphantu@gmail.com}{vuongphantu@gmail.com}%
}}

\date{\today}

\maketitle
\begin{abstract}
We introduce two new algorithms to minimise smooth difference of convex~(DC)
functions that accelerate the convergence of the classical DC algorithm
(DCA). We prove that the point computed by DCA can be used to define
a descent direction for the objective function evaluated at this point.
Our algorithms are based on a combination of DCA together with a line
search step that uses this descent direction. Convergence of the algorithms
is proved and the rate of convergence is analysed under the \L{}ojasiewicz
property of the objective function. We apply our algorithms to a class
of smooth DC programs arising in the study of biochemical reaction
networks, where the objective function is real analytic and thus satisfies
the \L{}ojasiewicz property. Numerical tests on various biochemical
models clearly show that our algorithms outperforms DCA, being on
average more than four times faster in both computational time and
the number of iterations. Numerical experiments show that the algorithms are globally convergent to a non-equilibrium steady state of various biochemical networks, with only chemically consistent restrictions on the network topology.
\end{abstract}
%

\section{Introduction}

Many problems arising in science and engineering applications require
the development of algorithms to minimise a nonconvex function. If
a nonconvex function admits a decomposition, this may be exploited
to tailor specialised optimisation algorithms. Our main focus is
the following optimisation problem
\begin{equation}
\ensuremath{\underset{x\in\mathbb{R}^{m}}{\textrm{minimise}}}\;\phi(x):=f_{1}(x)-f_{2}(x),\label{eq:DC1}
\end{equation}
where $f_{1},f_{2}:\mathbb{R}^{m}\to\mathbb{R}$ are continuously
differentiable convex functions and
\begin{equation}
\inf_{x\in\mathbb{R}^{m}}\phi(x)>-\infty.\label{eq:phi_bounded_below}
\end{equation}
In our case, as we shall see in Section~4, this problem arises in
the study of biochemical reaction networks. In general, $\phi$ is
a nonconvex function. The function in problem~\eqref{eq:DC1} belongs
to two important classes of functions: the class of functions that
can be decomposed as a sum of a convex function and a differentiable
function (\emph{composite functions}) and the class of functions that
are representable as difference of convex functions (\emph{DC functions}).

In 1981, M. Fukushima and H. Mine \cite{fukushima_generalized_1981,mine_minimization_1981}
introduced two algorithms to minimise a \emph{composite function}.
In both algorithms, the main idea is to linearly approximate the differentiable
part of the composite function at the current point and then minimise
the resulting convex function to find a new point. The difference
between the new and current points provides a descent direction with
respect to the composite function, when it is evaluated at the current
point. The next iteration is then obtained through a line search procedure
along this descent direction. Algorithms for minimising composite
functions have been extensively investigated and found applications
to many problems such as: inverse covariance estimate, logistic regression,
sparse least squares and feasibility problems, see e.g.~\cite{huang2014barzilai,Lee2014a,Li2014,nesterov2013gradient}
and the references quoted therein.

In 1986, T. Pham Dinh and S. El Bernoussi \cite{TaoSouad1986} introduced
an algorithm to minimise \emph{DC functions}.  In its simplified
form, the \emph{Difference of Convex functions Algorithm} (DCA) linearly
approximates the concave part of the objective function ($-f_{2}$
in \eqref{eq:DC1}) about the current point and then minimises the
resulting convex approximation to the DC function to find the next
iteration, without recourse to a line search. The main idea is similar
to Fukushima--Mine approach but was extended to the non-differentiable
case. This algorithm has been extensively studied by H.A.~Le Thi,
T.~Pham Dinh and their collaborators, see e.g.~\cite{an_numerical_1996,le2009convergence,tao1998dc,tao2005dc}.
DCA has been successfully applied in many fields, such as machine
learning, financial optimisation, supply chain management and telecommunication
\cite{collobert2006trading,schnorr2007variational,tao2005dc}. Nowadays,
DC programming plays an important role in nonconvex programming and
DCA is commonly used because of its key advantages: simplicity, inexpensiveness
and efficiency \cite{tao2005dc}.  Some results related
to the convergence rate for special classes of DC programs have been
also established~\cite{le2011solving,le2009convergence}.

In this paper we introduce two new algorithms to find stationary points
of DC programs, called \emph{Boosted Difference of Convex function
Algorithms} (BDCA), which accelerate DCA with a line search using
an Armijo type rule. The first algorithm directly uses a backtracking
technique, while the second uses a quadratic interpolation of the
objective function together with backtracking. Our algorithms are
based on both DCA and the proximal point algorithm approach of Fukushima--Mine.
First, we compute the point generated by DCA. Then, we use this point
to define the search direction. This search direction coincides with
the one employed by Fukushima--Mine in~\cite{fukushima_generalized_1981}.
The key difference between their method and ours is the starting point
used for the line search: in our algorithms we use the point generated
by DCA, instead of using the previous iteration. This scheme works
thanks to the fact that the defined search direction is not only a
descent direction for the objective function at the previous iteration,
as observed by Fukushima--Mine, but is also a descent direction at
the point generated by DCA. Unfortunately, as shown in Remark~\ref{rem:nonsmooth},
this scheme cannot be extended in general for nonsmooth functions, as the defined search
direction might be an ascent direction at the point generated by DCA.

Moreover, it is important to notice that the iterations of Fukushima--Mine
and BDCA never coincide, as the largest step size taken in their algorithm
is equal to one (which gives the DCA iteration). In fact, for smooth
functions, the iterations of Fukushima--Mine usually coincide with
the ones generated by DCA, as the step size equal to one is normally
accepted by their Armijo rule.

We should point out that DCA is a descent method without line search.
This is something that is usually claimed to be advantageous in the
large-scale setting. Our purpose here is the opposite: we show that
a line search can increase the performance even for high-dimensional
problems.

Further, we analyse the rate of convergence under the \L{}ojasiewicz
property~\cite{lojasiewicz1965ensembles} of the objective function.
It should be mentioned that the \L{}ojasiewicz property is recently
playing an important role for proving the convergence of optimisation
algorithms for analytic cost functions, see e.g.~\cite{absil2005convergence,attouch2009convergence,bolte2007lojasiewicz,le2009convergence}.

We have performed numerical experiments in functions arising in the
study of biochemical reaction networks. We show that the problem of
finding a steady state of these networks, which plays a crucial role
in the modelling of biochemical reaction systems, can be reformulated
as a minimisation problem involving DC functions. In fact, this is
the main motivation and starting point of our work: when one applies
DCA to find a steady state of these systems, the rate of convergence
is usually quite slow. As these problems commonly involve hundreds
of variables (even thousands in the most complex systems, as Recon~2%
\footnote{Recon 2 is the most comprehensive representation of human metabolism
that is applicable to computational modelling~\cite{Thiele2013}.
This biochemical network model involves more than four thousand molecular
species and seven thousand reversible elementary reactions.%
}), the speed of convergence becomes crucial. In our numerical tests
we have compared BDCA and DCA for finding a steady state in various
biochemical network models of different size. On average, DCA needed
five times more iterations than BDCA to achieve the same accuracy,
and what is more relevant, our implementation of BDCA was more than
four times faster than DCA to achieve the same accuracy. Thus, we
prove both theoretically and numerically that BDCA results more advantageous
than DCA. Luckily, the objective function arising in\emph{ }these
biochemical reaction networks is real analytic, a class of functions
which is known to satisfy the \L{}ojasiewicz property~\cite{lojasiewicz1965ensembles}.
Therefore, the above mentioned convergence analysis results can be
applied in this setting.

The rest of this paper is organised as follows. In Section~\ref{sec:Preliminaries},
we recall some preliminary facts used throughout the paper and we
present the main optimisation problem. Section~\ref{sec:BDCA} describes
our main results, where the new algorithms (BDCA) and their convergence
analysis for solving DC programs are established. A DC program arising
in biochemical reaction network problems is introduced in Section~\ref{sec:A-DC-problem}.
Numerical results comparing BDCA and DCA on various biochemical network
models are reported in Section~\ref{sec:Application-to-biochemistry-1}.
Finally, conclusions are stated in the last section.

\section{Preliminaries\label{sec:Preliminaries}}

Throughout this paper, the inner product of two vectors $x,y\in\mathbb{R}^{m}$
is denoted by $\langle x,y\rangle$, while $\|\cdot\|$ denotes the
induced norm, defined by $\|x\|=\sqrt{\langle x,x\rangle}$. The nonnegative
orthant in~$\mathbb{R}^{m}$ is denoted by~$\mathbb{R}_{+}^{m}=[0,\infty)^{m}$
and $\mathbb{B}(x,r)$ denotes the closed ball of center $x$ and
radius $r>0$. The gradient of a differentiable function $f:\mathbb{R}^{m}\to\mathbb{R}^{n}$
at some point $x\in\mathbb{R}^{m}$ is denoted by $\nabla f(x)\in\mathbb{R}^{m\times n}$.

Recall that a function $f:\mathbb{R}^{m}\to\mathbb{R}$ is said to
be \emph{convex} if
\[
f(\lambda x+(1-\lambda)y)\leq\lambda f(x)+(1-\lambda)f(y)\quad\text{for all }x,y\in\mathbb{R}^{m}\text{ and }\lambda\in(0,1).
\]
Further, $f$ is called \emph{strongly convex} with modulus $\sigma>0$
if
\[
f(\lambda x+(1-\lambda)y)\leq\lambda f(x)+(1-\lambda)f(y)-\frac{1}{2}\sigma\lambda(1-\lambda)\|x-y\|^{2}\quad\text{for all }x,y\in\mathbb{R}^{m}\text{ and }\lambda\in(0,1),
\]
or, equivalently, when $f-\frac{\sigma}{2}\|\cdot\|^{2}$ is convex.
The function $f$ is said to be \emph{coercive} if ~$f(x)\to+\infty$
whenever $\left\Vert x\right\Vert \to+\infty.$

On the other hand, a function $F:\mathbb{R}^{m}\to\mathbb{R}^{m}$
is said to be \emph{monotone} when
\[
\langle F(x)-F(y),x-y\rangle\geq0\quad\text{for all }x,y\in\mathbb{R}^{m}.
\]
Further, $F$ is called \emph{strongly monotone} with modulus $\sigma>0$
when
\[
\langle F(x)-F(y),x-y\rangle\geq\sigma\|x-y\|^{2}\quad\text{for all }x,y\in\mathbb{R}^{m}.
\]
 The function $F$ is called \emph{Lipschitz continuous} if there
is some constant $L\geq0$ such that
\[
\|F(x)-F(y)\|\leq L\|x-y\|,\quad\text{for all }x,y\in\mathbb{R}^{m}.
\]
 $F$ is called locally Lipschitz continuous if for every $x$ in
$\mathbb{R}^{m}$, there exists a neighbourhood $U$ of $x$ such
that $F$ restricted to $U$ is Lipschitz continuous.

We have the following well-known result.
\begin{proposition}
Let $f:\mathbb{R}^{m}\to\mathbb{R}$ be a differentiable function.
Then f is (strongly) convex if and only if $\nabla f$~is (strongly)
monotone.
\end{proposition}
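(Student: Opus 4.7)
The plan is to prove the convex case first and then reduce the strongly convex case to it by the standard translation $f\mapsto f-\tfrac{\sigma}{2}\|\cdot\|^{2}$. For the convex case, I would handle both implications via the first-order characterisation of convexity: a differentiable function $f$ is convex if and only if $f(y)\geq f(x)+\langle\nabla f(x),y-x\rangle$ for all $x,y\in\mathbb{R}^{m}$. I would take this characterisation as known (or sketch it via the fact that convexity of $f$ is equivalent to convexity of every univariate restriction $t\mapsto f(x+t(y-x))$, combined with the monotonicity of the derivative of a convex function of one real variable).

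For the direction ($f$ convex $\Rightarrow$ $\nabla f$ monotone), I would apply the first-order inequality twice, interchanging the roles of $x$ and $y$, and sum the two inequalities; the $f(x)$ and $f(y)$ terms cancel and what remains is exactly $\langle\nabla f(x)-\nabla f(y),x-y\rangle\geq 0$. For the converse, I would reduce to one variable by defining $g(t):=f(x+t(y-x))$ on $[0,1]$ and showing $g'$ is non-decreasing: for $0\leq t_{1}<t_{2}\leq 1$, monotonicity of $\nabla f$ applied to the two points $x+t_{i}(y-x)$ yields
\[
(t_{2}-t_{1})\langle\nabla f(x+t_{2}(y-x))-\nabla f(x+t_{1}(y-x)),y-x\rangle\geq 0,
\]
which rearranges to $g'(t_{2})\geq g'(t_{1})$. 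Hence $g$ is convex on $[0,1]$, and since $x,y$ are arbitrary this gives convexity of $f$.

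For the strongly convex case I would use the equivalence stated in the excerpt: $f$ is strongly convex with modulus $\sigma$ if and only if $h:=f-\tfrac{\sigma}{2}\|\cdot\|^{2}$ is convex. Since $h$ is differentiable with $\nabla h(x)=\nabla f(x)-\sigma x$, one checks
\[
\langle\nabla h(x)-\nabla h(y),x-y\rangle=\langle\nabla f(x)-\nabla f(y),x-y\rangle-\sigma\|x-y\|^{2},
\]
so $\nabla h$ is monotone exactly when $\nabla f$ is strongly monotone with modulus $\sigma$. Applying the convex case to $h$ then yields the strongly convex equivalence.

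The main obstacle is essentially notational: making sure the quadratic shift argument for the strongly convex case is stated cleanly and that the appeal to the first-order characterisation of convexity is either justified in passing or cited, so that the proof does not become circular. Beyond that, the argument is a direct chain of elementary manipulations.
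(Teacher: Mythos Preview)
Your argument is correct and follows the standard textbook route. Note, however, that the paper does not actually prove this proposition: it is introduced with ``We have the following well-known result'' and left unproved. So there is no proof in the paper to compare against; your sketch would serve perfectly well as the omitted justification.
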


To establish our convergence results, we will make use of the \emph{\L{}ojasiewicz
property}, defined next.
\begin{definition}
Let $f:\mathbb{R}^{n}\to\mathbb{R}$ be a differentiable function. \end{definition}
\begin{enumerate}
\item The function $f$ is said to have the \emph{\L{}ojasiewicz property}
if for any critical point $\bar{x}$, there exist constants $M>0,\varepsilon>0$
and $\theta\in[0,1)$ such that
\begin{equation}
|f(x)\text{\textminus}f(\bar{x})|^{\theta}\text{\ensuremath{\le}}M\left\Vert \nabla f(x)\right\Vert ,\quad\text{for all }x\in\mathbb{B}(\bar{x},\varepsilon),\label{eq:Lojasiewicz_Inequality}
\end{equation}
where we adopt the convention~$0^{0}=1$. The constant $\theta$
is called \emph{\L{}ojasiewicz exponent} of $f$ at $\bar{x}.$
\item The function $f$ is said to be \emph{real analytic}\textbf{ }if for
every~$x\in\mathbb{R}^{n}$, $f$~may be represented by a convergent
power series in some neighbourhood of $x$. \end{enumerate}
\begin{proposition}
\label{prop:real_analytic}\cite{lojasiewicz1965ensembles} Every
real analytic function $f:\mathbb{R}^{n}\to\mathbb{R}$ satisfies
the \L{}ojasiewicz property with exponent $\theta\in\left[0,1\right)$.
\end{proposition}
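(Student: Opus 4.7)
The plan is to treat this as a classical result whose complete proof would require substantial machinery from real analytic geometry, but whose main ideas can be sketched. First, I would reduce to the case $f(\bar{x})=0$ by replacing $f$ with $f-f(\bar{x})$, which remains real analytic and has the same gradient; the task becomes producing $M>0$, $\varepsilon>0$ and $\theta\in[0,1)$ with $|f(x)|^{\theta}\le M\|\nabla f(x)\|$ on $\mathbb{B}(\bar{x},\varepsilon)$. Since $f$ is analytic, so are $f$ and $\|\nabla f\|^{2}$, and both zero sets are semi-analytic, which puts the problem inside the framework where \L{}ojasiewicz's classical inequality for semi-analytic functions applies: if $g,h$ are real analytic on a neighbourhood of a compact set $K$ and $\{h=0\}\cap K\subset\{g=0\}\cap K$, there exist $C>0$ and $\alpha>0$ with $|g|^{\alpha}\le C|h|$ on $K$.

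Next, I would try to apply this directly with $g=f$ and $h=\|\nabla f\|^{2}$ on a small ball $K=\mathbb{B}(\bar{x},\varepsilon)$, which would yield $|f(x)|^{\alpha}\le C\|\nabla f(x)\|^{2}$ and hence the desired bound with $\theta=\alpha/2$. The subtle hypothesis is the zero set inclusion: one needs that every critical point of $f$ in a sufficiently small ball around $\bar{x}$ satisfies $f=0$. This is where the curve selection lemma for semi-analytic sets enters. If the inclusion failed on every ball, one could pick a sequence of critical points $x_{k}\to\bar{x}$ with $f(x_{k})\ne 0$, and the curve selection lemma would produce a real analytic arc $\gamma:[0,\delta)\to\mathbb{R}^{n}$ with $\gamma(0)=\bar{x}$ and $\nabla f(\gamma(t))=0$ for $t>0$. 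Then $(f\circ\gamma)'(t)=\langle\nabla f(\gamma(t)),\gamma'(t)\rangle=0$, forcing $f\circ\gamma\equiv0$ near $0$ by analyticity and contradicting $f(x_{k})\ne 0$.

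An alternative route, which I would pursue in parallel, is the Puiseux-expansion argument that gives the exponent directly. Suppose toward contradiction that no such $\theta<1$ and $M$ exist; then for each $k$ there is $x_{k}\in\mathbb{B}(\bar{x},1/k)$ with $|f(x_{k})|^{1-1/k}>k\|\nabla f(x_{k})\|$. Applying the curve selection lemma to the semi-analytic set where this inequality holds yields an analytic arc $\gamma(t)$ with $\gamma(0)=\bar{x}$. Along $\gamma$, both $f\circ\gamma$ and $\|\nabla f\circ\gamma\|^{2}$ admit convergent power (Puiseux) series $f(\gamma(t))=at^{p}+o(t^{p})$ and $\|\nabla f(\gamma(t))\|=bt^{q}+o(t^{q})$ with $a,b\ne 0$ and $p,q$ positive rationals. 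The chain rule identity $(f\circ\gamma)'(t)=\langle\nabla f(\gamma(t)),\gamma'(t)\rangle$ combined with $\|\gamma'(t)\|=O(t^{r})$ for some $r\ge 0$ gives $p-1\ge q+r\ge q$, so $q\le p-1<p$, and any $\theta\in(q/p,1)$ satisfies $|f(\gamma(t))|^{\theta}\le M\|\nabla f(\gamma(t))\|$ on the curve, contradicting the construction.

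The main obstacle, which I would not attempt to reprove from scratch, is precisely the curve selection lemma and the structure theory of semi-analytic sets underpinning both arguments; these are the deep inputs coming from \cite{lojasiewicz1965ensembles}. Granting them, the proof amounts to an exponent comparison along analytic curves. I expect the cleanest write-up to follow the second route, since it produces an explicit relationship between the \L{}ojasiewicz exponent $\theta$ and the Puiseux data of $f$ along critical curves, and it makes transparent why $\theta$ can always be chosen strictly less than~$1$.
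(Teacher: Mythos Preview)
The paper does not supply a proof of this proposition at all: it is stated with a citation to \L{}ojasiewicz's original monograph and then used as a black box. So there is no ``paper's own proof'' to compare against; the authors treat it exactly as you anticipate in your opening sentence, namely as a classical result from real analytic geometry.

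Your sketch is a reasonable outline of the ideas behind the classical proof, and you correctly identify the curve selection lemma and the semi-analytic structure theory as the non-elementary inputs. Two technical points are worth flagging. First, in your first route, the general \L{}ojasiewicz inequality for semi-analytic functions only yields \emph{some} exponent $\alpha>0$ in $|f|^{\alpha}\le C\|\nabla f\|^{2}$; nothing there forces $\alpha<2$, so this route as written does not yet produce $\theta=\alpha/2<1$ without an additional argument. Second, in your contradiction set-up for the Puiseux route, the sets $\{|f|^{1-1/k}>k\|\nabla f\|\}$ vary with $k$, so you cannot apply curve selection directly to ``the set where the inequality holds''; the standard fix is either to pass to a single semi-analytic set such as $\{|f|>\|\nabla f\|^{p}\}$ for a fixed rational $p$, or to argue along an arbitrary analytic arc through $\bar{x}$ and use the relation $p-1\ge q$ between Puiseux orders (which you derive correctly) to conclude. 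With those adjustments your second route is the standard one and does deliver $\theta<1$.
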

Problem~\eqref{eq:DC1} can be easily transformed into an equivalent
problem involving strongly convex functions. Indeed, choose any $\rho>0$
and consider the functions $g(x):=f_{1}(x)+\frac{\rho}{2}\|x\|^{2}$
and $h(x):=f_{2}(x)+\frac{\rho}{2}\|x\|^{2}$. Then $g$ and $h$
are strongly convex functions with modulus $\rho$ and $g(x)-h(x)=\phi(x)$,
for all $x\in\mathbb{R}^{m}$. In this way, we obtain the equivalent
problem
\begin{equation}
\left(\mathcal{P}\right)\ \text{\ensuremath{\underset{x\in\mathbb{R}^{m}}{\textrm{minimise}}}}\;\phi(x)=g(x)-h(x).\label{eq:DC_2norm}
\end{equation}

The key step to solve~$\left(\mathcal{P}\right)$ with DCA is to
approximate the concave part $-h$ of the objective function~$\phi$
by its affine majorisation and then minimise the resulting convex
function. The algorithm proceeds as follows.
%
\begin{center}
\bgroup 	\renewcommand\theenumi{\arabic{enumi}.} 	 \renewcommand\labelenumi{\theenumi}
\fbox{%
\begin{minipage}[t]{.98\textwidth}%
\textbf{ALGORITHM 1:} (DCA, \cite{an_numerical_1996})
\begin{enumerate}
\item Let $x_{0}$ be any initial point and set $k:=0$.
\item Solve the strongly convex optimisation problem
\[
\left(\mathcal{P}_{k}\right)\ \underset{x\in\mathbb{R}^{m}}{\textrm{minimise}}\; g(x)-\langle\nabla h(x_{k}),x\rangle.
\]
 to obtain the unique solution $y_{k}$.
\item If $y_{k}=x_{k}$ then STOP and RETURN $x_{k}$, otherwise set $x_{k+1}:=y_{k}$,
set $k:=k+1$, and go to Step~2.\medskip{}
 \end{enumerate}
\end{minipage}} \egroup{}
\end{center}

In \cite{fukushima_generalized_1981} Fukushima and Mine adapted their
original algorithm reported in \cite{mine_minimization_1981} by adding
a proximal term $\frac{\rho}{2}\left\Vert x-x_{k}\right\Vert ^{2}$
to the objective of the convex optimisation subproblem. As a result
they obtain an optimisation subproblem that is identical to the one
in Step 2 of DCA, when one transforms (\ref{eq:DC1}) into (\ref{eq:DC_2norm})
by adding $\frac{\rho}{2}\|x\|^{2}$ to each convex function. In contrast
to DCA, Fukushima--Mine algorithm~\cite{fukushima_generalized_1981}
also includes a line search along the direction $d_{k}:=y_{k}-x_{k}$
to find the smallest nonnegative integer $l_{k}$ such that the Armijo
type rule
\begin{equation}
\phi(x_{k}+\beta^{l_{k}}d_{k})\leq\phi(x_{k})-\alpha\beta^{l_{k}}\left\Vert d_{k}\right\Vert ^{2}\label{eq:Armijo}
\end{equation}
is satisfied, where $\alpha>0$ and $0<\beta<1$. Thus, when~$l_{k}=0$
satisfies~\eqref{eq:Armijo}, i.e. when
\[
\phi(y_{k})\leq\phi(x_{k})-\alpha\left\Vert d_{k}\right\Vert ^{2},
\]
one has~$x_{k+1}=y_{k}$ and the iterations of both algorithms coincide.
As we shall see in Proposition~\ref{prop:main_inequality}, this
is guaranteed to happen if~$\alpha\leq\rho$.

\section{Boosted DC Algorithms\label{sec:BDCA}}

Let us introduce our first algorithm to solve $(\mathcal{P})$, which
we call a \emph{Boosted DC Algorithm} \emph{with Backtracking}. The
algorithm is a combination of Algorithm~1 and the algorithm of Fukushima--Mine~\cite{fukushima_generalized_1981}.
\begin{center}
\bgroup 	\renewcommand\theenumi{\arabic{enumi}.} 	 \renewcommand\labelenumi{\theenumi}
\fbox{%
\begin{minipage}[t]{.98\textwidth}%
\textbf{ALGORITHM 2:} (BDCA-Backtracking)
\begin{enumerate}
\item Fix $\alpha>0$, $\bar{\lambda}>0$ and $0<\beta<1$. Let $x_{0}$
be any initial point and set $k:=0$.
\item Solve the strongly convex minimisation problem
\[
\left(\mathcal{P}_{k}\right)\underset{x\in\mathbb{R}^{m}}{\textrm{\;\ minimise}}\; g(x)-\langle\nabla h(x_{k}),x\rangle
\]
 to obtain the unique solution $y_{k}$.
\item Set $d_{k}:=y_{k}-x_{k}$. If $d_{k}=0$, STOP and RETURN $x_{k}$.
Otherwise, go to Step~4.
\item Set $\lambda_{k}:=\bar{\lambda}$. WHILE $\phi(y_{k}+\lambda_{k}d_{k})>\phi(y_{k})-\alpha\lambda_{k}\|d_{k}\|^{2}$
DO $\lambda_{k}:=\beta\lambda_{k}$.
\item Set $x_{k+1}:=y_{k}+\lambda_{k}d_{k}$. If $x_{k+1}=x_{k}$ then STOP
and RETURN $x_{k}$, otherwise set $k:=k+1$, and go to Step~2.\medskip{}
 \end{enumerate}
\end{minipage}}
\egroup{}
\end{center}

The next proposition shows that the solution of $(\mathcal{P}_{k})$,
which coincides with the DCA subproblem in Algorithm 1, provides a
decrease in the value of the objective function. For the sake of completeness,
we include its short proof.
\begin{proposition}
\label{prop:main_inequality}For all $k\in\mathbb{N}$, it holds that
\begin{equation}
\phi(y_{k})\leq\phi(x_{k})-\rho\|d_{k}\|^{2}.\label{eq:main_inequality_dca}
\end{equation}
\end{proposition}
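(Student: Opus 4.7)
The plan is to use two ingredients: (i) the first-order optimality condition for the strongly convex subproblem $(\mathcal{P}_k)$, and (ii) the strong-convexity inequalities for $g$ and $h$ (which are available since both functions are strongly convex with modulus $\rho$, being $f_1+\tfrac{\rho}{2}\|\cdot\|^2$ and $f_2+\tfrac{\rho}{2}\|\cdot\|^2$).

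First I would write the optimality condition: since $y_k$ is the unique minimiser of the differentiable convex function $x\mapsto g(x)-\langle\nabla h(x_k),x\rangle$, we get $\nabla g(y_k)=\nabla h(x_k)$. Next, from strong convexity of $g$ with modulus $\rho$ applied at the pair $(y_k,x_k)$, one has
\[
g(x_k)\ge g(y_k)+\langle\nabla g(y_k),\,x_k-y_k\rangle+\tfrac{\rho}{2}\|x_k-y_k\|^2,
\]
and from strong convexity of $h$ with modulus $\rho$ applied at the pair $(x_k,y_k)$,
\[
h(y_k)\ge h(x_k)+\langle\nabla h(x_k),\,y_k-x_k\rangle+\tfrac{\rho}{2}\|y_k-x_k\|^2.
\]
Substituting $\nabla g(y_k)=\nabla h(x_k)$ in the first inequality makes the linear terms in the two inequalities identical (up to sign), so adding them causes the inner-product terms to cancel and yields
\[
g(x_k)-g(y_k)+h(y_k)-h(x_k)\ge \rho\|d_k\|^2,
\]
which is exactly $\phi(x_k)-\phi(y_k)\ge\rho\|d_k\|^2$, i.e.\ the claim.

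There is really no obstacle here: the only subtle point is making sure the two strong-convexity inequalities are written with the correct base points so that, after applying $\nabla g(y_k)=\nabla h(x_k)$, the linear terms cancel rather than reinforce. Everything else is a one-line rearrangement, and the proof does not require differentiability of $\phi$ beyond what is already assumed on $g$ and $h$.
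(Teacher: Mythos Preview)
Your proof is correct and essentially identical to the paper's: both use the optimality condition $\nabla g(y_k)=\nabla h(x_k)$ together with the strong-convexity inequalities for $g$ (around $y_k$) and $h$ (around $x_k$), then add them so that the linear terms cancel. The only cosmetic difference is that the paper substitutes $\nabla g(y_k)=\nabla h(x_k)$ into the $g$-inequality before writing it down, whereas you substitute after.
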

\begin{proof}
Since $y_{k}$ is the unique solution of the strongly convex problem
$\left(\mathcal{P}_{k}\right)$, we have
\begin{equation}
\nabla g(y_{k})=\nabla h(x_{k}),\label{eq:optimalcondition}
\end{equation}
 which implies
\[
g(x_{k})-g(y_{k})\geq\langle\nabla h(x_{k}),x_{k}-y_{k}\rangle+\frac{\rho}{2}\|x_{k}-y_{k}\|^{\text{2}}.
\]
On the other hand, the strong convexity of $h$ implies
\[
h(y_{k})-h(x_{k})\geq\langle\nabla h(x_{k}),y_{k}-x_{k}\rangle+\frac{\rho}{2}\|y_{k}-x_{k}\|^{\text{2}}.
\]
Adding the two previous inequalities, we have
\[
g(x_{k})-g(y_{k})+h(y_{k})-h(x_{k})\geq\rho\|x_{k}-y_{k}\|^{2},
\]
which implies \eqref{eq:main_inequality_dca}.
\end{proof}
If $\lambda_{k}=0$, the iterations of BDCA-Backtracking coincide
with those of DCA, since the latter sets $x_{k+1}:=y_{k}$. Next we
show that $d_{k}=y_{k}-x_{k}$ is a descent direction for $\phi$
at $y_{k}$. Thus, one can achieve a larger decrease in the value
of $\phi$ by moving along this direction. This simple fact, which
permits an improvement in the performance of DCA, constitutes the
key idea of our algorithms.
\begin{proposition}
\label{prop:descent_direction}For all $k\in\mathbb{N}$, we have
\begin{equation}
\langle\nabla\phi(y_{k}),d_{k}\rangle\leq-\rho||d_{k}||^{2};\label{eq:descent_direction}
\end{equation}
that is, $d_{k}$ is a descent direction for $\phi$ at $y_{k}$.\end{proposition}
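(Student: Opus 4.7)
The plan is to expand $\nabla\phi(y_k) = \nabla g(y_k) - \nabla h(y_k)$ and then exploit the optimality condition for the subproblem $(\mathcal{P}_k)$ to replace $\nabla g(y_k)$ by $\nabla h(x_k)$, reducing everything to a monotonicity statement for $\nabla h$.

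First I would recall, exactly as in the proof of Proposition~\ref{prop:main_inequality}, that since $y_k$ is the unique minimiser of the strongly convex problem $(\mathcal{P}_k)$, the first-order optimality condition reads $\nabla g(y_k) = \nabla h(x_k)$. Substituting this into $\nabla\phi(y_k)$ gives
\[
\nabla\phi(y_k) = \nabla h(x_k) - \nabla h(y_k).
\]
Then, taking the inner product with $d_k = y_k - x_k$,
\[
\langle \nabla\phi(y_k), d_k\rangle = \langle \nabla h(x_k) - \nabla h(y_k), y_k - x_k\rangle = -\langle \nabla h(y_k) - \nabla h(x_k), y_k - x_k\rangle.
\]

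The final step uses that $h$ is strongly convex with modulus $\rho$, so by the proposition linking strong convexity of a differentiable function with strong monotonicity of its gradient, $\nabla h$ is strongly monotone with modulus $\rho$. Hence $\langle \nabla h(y_k) - \nabla h(x_k), y_k - x_k\rangle \geq \rho\|y_k-x_k\|^2 = \rho\|d_k\|^2$, and \eqref{eq:descent_direction} follows immediately. The descent-direction conclusion is then just the observation that $-\rho\|d_k\|^2 < 0$ whenever $d_k \neq 0$, which is ensured by Step~3 of the algorithm.

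There is really no obstacle here: the argument is a one-line manipulation once the optimality condition of the DCA subproblem is combined with strong monotonicity of $\nabla h$. The only point that merits care is to notice that strong convexity of $h$ (not just $g$) is what delivers the bound, so the reformulation into $(\mathcal{P})$ with $\rho > 0$ added to \emph{both} convex pieces is essential for the argument to go through.
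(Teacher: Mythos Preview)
Your proof is correct and follows essentially the same approach as the paper: both combine the optimality condition $\nabla g(y_k)=\nabla h(x_k)$ with the strong monotonicity of $\nabla h$ to obtain~\eqref{eq:descent_direction}. The only difference is cosmetic ordering---you substitute the optimality condition first and then invoke monotonicity, whereas the paper states the monotonicity inequality first and then substitutes---but the argument is identical in content.
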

\begin{proof}
The function $h$ is strongly convex with constant $\rho$. This implies
that $\nabla h$ is strongly monotone with constant $\rho$; whence,
\[
\langle\nabla h(x_{k})-\nabla h(y_{k}),x_{k}-y_{k}\rangle\geq\rho\|x_{k}-y_{k}\|^{\text{2}}.
\]
Further, since $y_{k}$ is the unique solution of the strongly convex
problem $\left(\mathcal{P}_{k}\right)$, we have
\[
\nabla h(x_{k})=\nabla g(y_{k}),
\]
 which implies,
\[
\langle\nabla\phi(y_{k}),d_{k}\rangle=\langle\nabla g(y_{k})-\nabla h(y_{k}),d_{k}\rangle\leq-\rho\|d_{k}\|^{\text{2}},
\]
and completes the proof.
\end{proof}
\begin{remark}\label{rem:nonsmooth}
In general, Proposition~\ref{prop:descent_direction} does not remain valid when $g$ is not differentiable. In fact, the direction $d_k$ might be an ascent direction, in which case Step 4 in Algorithm~4 could become an infinite loop.
For instance, consider $g(x)=|x|+\frac{1}{2}x^2+\frac{1}{2}x$ and $h(x)=\frac{1}{2}x^2$ for $x\in\mathbb{R}$. If $x_0=\frac{1}{2}$, one has
\[
\left(\mathcal{P}_{0}\right)\underset{x\in\mathbb{R}}{\textrm{\;\ minimise}}\; |x|+\frac{1}{2}x^2+\frac{1}{2}x-\frac{1}{2}x,
\]
whose unique solution is $y_0=0$. Then, the one-sided directional derivative of $\phi$ at $y_0$ in the direction  $d_0=y_0-x_0=-\frac{1}{2}$ is given by
$$\phi'(y_0;d_0)=\lim_{t\downarrow 0} \frac{\phi(0+t(-1/2))-\phi(0)}{t}=\frac{1}{4}.$$
Thus, $d_0$ is an ascent direction for $\phi$ at $y_0$ (actually, $y_0$ is the global minimum of $\phi$).
\end{remark}
As a corollary, we deduce that the backtracking Step 4 of Algorithm~2
terminates finitely when $\rho>\alpha$.
\begin{corollary}
Suppose that $\rho>\alpha$. Then, for all $k\in\mathbb{N},$ there
is some $\delta_{k}>0$ such that
\begin{equation}
\phi\left(y_{k}+\lambda d_{k}\right)\leq\phi(y_{k})-\alpha\lambda\|d_{k}\|^{2},\quad\text{for all }\lambda\in[0,\delta_{k}].\label{eq:main_inequality_bdca}
\end{equation}
\end{corollary}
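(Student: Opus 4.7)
The plan is to invoke Proposition~\ref{prop:descent_direction} together with a first-order Taylor expansion of $\phi$ at $y_k$. Since the algorithm only reaches Step~4 when $d_k\neq 0$, we may assume $\|d_k\|>0$; then Proposition~\ref{prop:descent_direction} gives
\[
\langle\nabla\phi(y_k),d_k\rangle\leq -\rho\|d_k\|^2 < -\alpha\|d_k\|^2,
\]
where the strict inequality uses the standing hypothesis $\rho>\alpha$.

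Next, I would recall that $\phi=g-h$ is continuously differentiable (since $g$ and $h$ are), and hence
\[
\lim_{\lambda\downarrow 0}\frac{\phi(y_k+\lambda d_k)-\phi(y_k)}{\lambda}=\langle\nabla\phi(y_k),d_k\rangle\leq -\rho\|d_k\|^2.
\]
Combined with the strict inequality $-\rho\|d_k\|^2<-\alpha\|d_k\|^2$, the definition of the limit yields some $\delta_k>0$ such that
\[
\frac{\phi(y_k+\lambda d_k)-\phi(y_k)}{\lambda}\leq -\alpha\|d_k\|^2 \quad\text{for all }\lambda\in(0,\delta_k],
\]
which rearranges to the desired inequality~\eqref{eq:main_inequality_bdca}. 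The case $\lambda=0$ holds trivially with equality.

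There is no real obstacle here: the only subtlety is making sure that the strict gap $\rho-\alpha>0$ is used to absorb the $o(\lambda)$ remainder term, which is exactly what allows us to pass from a non-strict bound on the directional derivative to a genuine neighborhood of step sizes for which the Armijo-type condition holds. This is why the hypothesis $\rho>\alpha$ (rather than $\rho\geq\alpha$) is needed, and it is also what guarantees that the \textbf{while} loop in Step~4 of Algorithm~2 terminates after finitely many backtracking steps $\lambda_k=\bar\lambda,\beta\bar\lambda,\beta^2\bar\lambda,\ldots$, since eventually $\beta^{\ell}\bar\lambda\in[0,\delta_k]$.
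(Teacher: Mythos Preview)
Your proof is correct and follows essentially the same route as the paper: both invoke Proposition~\ref{prop:descent_direction} and then use a first-order argument to produce the interval~$[0,\delta_k]$. The only cosmetic difference is that the paper writes this out via the mean value theorem and continuity of~$\nabla\phi$ (bounding $\|\nabla\phi(y_k+t_\lambda\lambda d_k)-\nabla\phi(y_k)\|$ by $(\rho-\alpha)\|d_k\|$ for small~$\lambda$), whereas you go straight through the definition of the directional derivative as a limit of difference quotients; the strict gap $\rho-\alpha>0$ plays the identical role in both versions.
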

\begin{proof}
If $d_{k}=0$ there is nothing to prove. Otherwise, by the mean value
theorem, there is some~${t_\lambda}\in(0,1)$ such that
\begin{align*}
\phi(y_{k}+\lambda d_{k})-\phi(y_{k}) & =\langle\nabla\phi(y_{k}+{t_\lambda}\lambda d_{k}),\lambda d_{k}\rangle\\
 & =\lambda\langle\nabla\phi(y_{k}),d_{k}\rangle+\lambda\langle\nabla\phi(y_{k}+{t}_\lambda\lambda d_{k})-\nabla\phi(y_{k}),d_{k}\rangle\\
 & \leq-\rho\lambda\|d_{k}\|^{2}+\lambda\|\nabla\phi(y_{k}+{t_\lambda}\lambda d_{k})-\nabla\phi(y_{k})\|\|d_{k}\|.
\end{align*}
As~$\nabla\phi$ is continuous at~$y_{k}$, there is some~$\delta>0$
such that
\[
\|\nabla\phi(z)-\nabla\phi(y_{k})\|\leq(\rho-\alpha)\|d_{k}\|\text{ whenever }\|z-y_{k}\|\leq\delta.
\]
 Since $\|y_{k}+{t_\lambda}\lambda d_{k}-y_{k}\|={t_\lambda}\lambda\|d_{k}\|\leq \lambda\|d_{k}\|$,
then for all~$\lambda\in\left(0,\frac{\delta}{\|d_{k}\|}\right)$,
we deduce
\[
\phi(y_{k}+\lambda d_{k})-\phi(y_{k})\leq-\rho\lambda\|d_{k}\|^{2}+(\rho-\alpha)\lambda\|d_{k}\|^{2}=-\alpha\lambda\|d_{k}\|^{2},
\]
 and the proof is complete.
\end{proof}
\begin{remark}
Notice that $y_{k}+\lambda d_{k}=x_{k}+(1+\lambda)d_{k}$. Therefore,
Algorithm~2 uses the same direction as the Fukushima--Mine algorithm
\cite{fukushima_generalized_1981}, where $x_{k+1}=x_{k}+\beta^{l}d_{k}=\beta^{l}y_{k}+\left(1-\beta^{l}\right)x_{k}$
for some $0<\beta<1$ and some nonnegative integer $l$. The iterations
would be the same if $\beta^{l}=\lambda+1$. Nevertheless, as $0<\beta<1$,
the step size $\lambda=\beta^{l}-1$ chosen in the Fukushima--Mine
algorithm \cite{fukushima_generalized_1981} is always less than or
equal to zero, while in Algorithm 2, only step sizes $\lambda\in\,]0,\bar{\lambda}]$
are explored. Moreover, observe that the Armijo type rule \eqref{eq:Armijo},
as used in~\cite{fukushima_generalized_1981}, searches for an $l_{k}$
such that $\phi(x_{k}+\beta^{l_{k}}d_{k})<\phi(x_{k})$, whereas Algorithm
2 searches for a $\lambda_{k}$ such that $\phi(y_{k}+\lambda_{k}d_{k})<\phi(y_{k})$.
We know from (\ref{eq:main_inequality_dca}) and (\ref{eq:main_inequality_bdca})
that
\[
\phi\left(y_{k}+\lambda d_{k}\right)\leq\phi(y_{k})-\alpha\lambda\|d_{k}\|^{2}\leq\phi(x_{k})-(\rho+\alpha\lambda)\|d_{k}\|^{2};
\]
thus, Algorithm 2 results in a larger decrease in the value of $\phi$
at each iteration than DCA, which sets $\lambda:=0$ and $x_{k+1}:=y_{k}$.
Therefore, a faster convergence of Algorithm 2 compared with DCA is
expected, see Figure~\ref{fig:ex1} and Figure~\ref{fig:comparison_DCA_BDCA_2and3}.
\end{remark}
The following convergence results were inspired by \cite{attouch2009convergence},
which in turn were adapted from the original ideas of \L{}ojasiewicz;
see also~\cite[Section~3.2]{Bolte2013}.
\begin{proposition}
\label{prop:innerloop}For any $x_{0}\in\mathbb{R}^{m}$, either Algorithm~2
returns a stationary point of~$\left(\mathcal{P}\right)$ or it generates
an infinite sequence such that the following holds.
\begin{enumerate}
\item $\phi(x_{k})$ is monotonically decreasing and convergent to some
$\phi^{*}$.
\item Any limit point of $\{x_{k}\}$ is a stationary point of $\left(\mathcal{P}\right)$.
If in addition, $\phi$ is coercive then there exits a subsequence
of $\{x_{k}\}$ which converges to a stationary point of~$\left(\mathcal{P}\right)$.
\item $\sum_{k=0}^{\infty}\|d_{k}\|^{2}<\infty$ and $\sum_{k=0}^{\infty}\|x_{k+1}-x_{k}\|^{2}<\infty$.\end{enumerate}
\end{proposition}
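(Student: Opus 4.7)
The plan is to start from the two key inequalities already available: from Proposition~3.1 we have $\phi(y_k)\leq\phi(x_k)-\rho\|d_k\|^2$, and the backtracking loop (together with the previous corollary) guarantees $\phi(x_{k+1})=\phi(y_k+\lambda_k d_k)\leq\phi(y_k)-\alpha\lambda_k\|d_k\|^2\leq\phi(y_k)$. Chaining these gives $\phi(x_{k+1})\leq\phi(x_k)-\rho\|d_k\|^2$, which immediately yields the monotone decrease in part~(i); combining with the standing hypothesis $\inf\phi>-\infty$ produces convergence to some $\phi^*$, proving~(i). Note this also shows that if the algorithm does not terminate in Step~3 or Step~5 then $d_k\neq 0$ throughout and the construction is well-defined.

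For part~(iii), I would telescope the inequality $\phi(x_{k+1})-\phi(x_k)\leq-\rho\|d_k\|^2$ from $k=0$ to $N$ and let $N\to\infty$, obtaining $\rho\sum_{k=0}^{\infty}\|d_k\|^2\leq\phi(x_0)-\phi^*<\infty$. For the second series, observe that $x_{k+1}-x_k=y_k+\lambda_k d_k-x_k=(1+\lambda_k)d_k$, so $\|x_{k+1}-x_k\|^2=(1+\lambda_k)^2\|d_k\|^2\leq(1+\bar\lambda)^2\|d_k\|^2$ because $\lambda_k\in(0,\bar\lambda]$ by construction of Step~4. Summability of $\|x_{k+1}-x_k\|^2$ then follows directly from summability of $\|d_k\|^2$.

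For part~(ii), let $x^*$ be any limit point, with $x_{k_j}\to x^*$ along some subsequence. From~(iii) we have $\|d_{k_j}\|\to 0$, so $y_{k_j}=x_{k_j}+d_{k_j}\to x^*$ as well. The optimality condition from the subproblem $(\mathcal{P}_k)$ reads $\nabla g(y_k)=\nabla h(x_k)$; passing to the limit along $k_j$ and using continuity of $\nabla g$ and $\nabla h$ gives $\nabla g(x^*)=\nabla h(x^*)$, i.e.\ $\nabla\phi(x^*)=0$, so $x^*$ is stationary. Under the additional coercivity assumption, the inclusion $\{x_k\}\subseteq\{x:\phi(x)\leq\phi(x_0)\}$ (from part~(i)) shows the iterates lie in a bounded set, so Bolzano--Weierstrass furnishes a convergent subsequence, whose limit is stationary by the argument just given.

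I do not expect any serious obstacle: the proof is essentially a telescoping/compactness argument once Proposition~3.1 and the Armijo descent estimate are in hand. The only subtle point is making sure that the step size $\lambda_k$ is bounded above by $\bar\lambda$ (needed to bound $\|x_{k+1}-x_k\|$ by a constant multiple of $\|d_k\|$); this is immediate from Step~4, which only shrinks $\lambda_k$ from its initial value $\bar\lambda$. No use of the \L ojasiewicz property is required here — that will only enter in the subsequent rate-of-convergence results.
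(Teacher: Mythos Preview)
Your proposal is correct and follows essentially the same approach as the paper's proof. The only cosmetic differences are that the paper retains the slightly sharper decrease estimate $\phi(x_{k+1})\leq\phi(x_k)-(\alpha\lambda_k+\rho)\|d_k\|^2$ (though it, too, ultimately only uses the $\rho$ term for summability) and establishes $\|d_k\|\to 0$ directly from this estimate before proving~(ii), rather than deducing it from~(iii) as you do; the logical content is identical.
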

\begin{proof}
Because of~\eqref{eq:optimalcondition}, if Algorithm~2 stops at
Step~3 and returns $x_{k}$, then $x_{k}$ must be a stationary point
of~$\left(\mathcal{P}\right)$. Otherwise, by Proposition~\ref{prop:main_inequality}
and Step~4 of Algorithm~2, we have
\begin{equation}
\phi(x_{k+1})\leq\phi(y_{k})-\alpha\lambda_{k}\|d_{k}\|^{2}\leq\phi(x_{k})-(\alpha\lambda_{k}+\rho)\|d_{k}\|^{2}.\label{eq:phi_decreasing}
\end{equation}
Hence, as the sequence $\{\phi(x_{k})\}$ is monotonically decreasing
and bounded from below by~\eqref{eq:phi_bounded_below}, it converges
to some $\phi^{*}$, which proves~(i). Consequently, we have
\[
\phi(x_{k+1})-\phi(x_{k})\to0.
\]
Thus, by~\eqref{eq:phi_decreasing}, one has $\|d_{k}\|^{2}=\|y_{k}-x_{k}\|^{2}\to0.$

Let $\bar{x}$ be any limit point of $\{x_{k}\}$, and let $\{x_{k_{i}}\}$
be a subsequence of $\{x_{k}\}$ converging to $\bar{x}$. Since $\|y_{k_{i}}-x_{k_{i}}\|\to0$,
one has
\[
y_{k_{i}}\to\bar{x}.
\]
Taking the limit as $i\to\infty$ in~\eqref{eq:optimalcondition},
as $\nabla h$ and $\nabla g$ are continuous, we have $\nabla h(\bar{x})=\nabla g(\bar{x})$.

If $\phi$ is coercive, since the sequence $\{\phi(x_{k})\}$ is convergent,
then the sequence $\{x_{k}\}$ is bounded. This implies that there
exits a subsequence of $\{x_{k}\}$ converging to $\bar{x}$, a stationary
point of $\left(\mathcal{P}\right)$, which proves~(ii).

To prove~(iii), observe that~\eqref{eq:phi_decreasing} implies
that
\begin{equation}
(\alpha\lambda_{k}+\rho)\|d_{k}\|^{2}\leq\phi(x_{k})-\phi(x_{k+1}).\label{eq:ineq_descent_phi}
\end{equation}
Summing this inequality from~$0$ to~$N$, we obtain
\begin{equation}
\sum_{k=0}^{N}(\alpha\lambda_{k}+\rho)\|d_{k}\|^{2}\leq\phi(x_{0})-\phi(x_{N+1})\leq\phi(x_{0})-\inf_{x\in\mathbb{R}^{m}}\phi(x),\label{eq:bound_sum}
\end{equation}
whence, taking the limit when $N\to\infty,$
\[
\sum_{k=0}^{\infty}\rho\|d_{k}\|^{2}\leq\sum_{k=0}^{\infty}(\alpha\lambda_{k}+\rho)\|d_{k}\|^{2}\leq\phi(x_{0})-\inf_{x\in\mathbb{R}^{m}}\phi(x)<\infty,
\]
 so we have $\sum_{k=0}^{\infty}\|d_{k}\|^{2}<\infty$. Since
\[
x_{k+1}-x_{k}=y_{k}-x_{k}+\lambda_{k}d_{k}=(1+\lambda_{k})d_{k},
\]
we obtain
\[
\sum_{k=0}^{\infty}\|x_{k+1}-x_{k}\|^{2}=\sum_{k=0}^{\infty}(1+\lambda_{k})^{2}\|d_{k}\|^{2}\leq(1+\bar{\lambda})^{2}\sum_{k=0}^{\infty}\|d_{k}\|^{2}<\infty,
\]
and the proof is complete.
\end{proof}
We will employ the following useful lemma to obtain bounds on the
rate of convergence of the sequences generated by Algorithm~2. This
result appears within the proof of~\cite[Theorem~2]{attouch2009convergence}
for specific values of~$\alpha$ and~$\beta$. See also~\cite[Theorem~3.3]{le2009convergence},
or very recently, \cite[Theorem~3]{Li2014}.
\begin{lemma}
\label{lem:rate_convergence}Let~$\left\{ s_{k}\right\} $ be a sequence
in~$\mathbb{R}_{+}$ and let~$\alpha,\beta$ be some positive constants.
Suppose that $s_{k}\to0$ and that the sequence satisfies
\begin{equation}
s_{k}^{\alpha}\leq\beta(s_{k}-s_{k+1}),\quad\text{for all }k\text{ sufficiently large.}\label{eq:ineq_seq}
\end{equation}
Then
\begin{enumerate}
\item if~$\alpha=0$, the sequence~$\left\{ s_{k}\right\} $ converges
to~$0$ in a finite number of steps;
\item if~$\alpha\in\left(0,1\right]$, the sequence~$\left\{ s_{k}\right\} $
converges linearly to~$0$ with rate~$1-\frac{1}{\beta}$;
\item if~$\alpha>1$, there exists~$\eta>0$ such that
\[
s_{k}\leq\eta k^{-\frac{1}{\alpha-1}},\quad\text{for all }k\text{ sufficiently large.}
\]
\end{enumerate}
\end{lemma}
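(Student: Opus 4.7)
The plan is to dispatch the three cases separately, anchored on the observation that the recursion~\eqref{eq:ineq_seq} forces $\{s_k\}$ to be eventually monotonically non-increasing (since the right-hand side must be nonnegative).

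For case~(i), with $\alpha=0$ and the convention $0^0=1$, I would argue by contradiction: if $s_k>0$ for all large $k$, then~\eqref{eq:ineq_seq} reads $1\leq\beta(s_k-s_{k+1})$, so $s_k-s_{k+1}\geq 1/\beta$; iterating this from any large index forces $s_k\to-\infty$, incompatible with $s_k\geq 0$. Hence $s_k=0$ from some index on, which is exactly finite termination.

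For case~(ii), where $\alpha\in(0,1]$, I would use that $s_k\to 0$ implies $s_k\leq 1$ eventually, which gives $s_k\leq s_k^{\alpha}$. Plugging into~\eqref{eq:ineq_seq} yields $s_k\leq\beta(s_k-s_{k+1})$, i.e. $s_{k+1}\leq(1-1/\beta)s_k$, which is the claimed linear rate. (If $\beta<1$, this forces $s_{k+1}\leq 0$ and thus finite termination, which is consistent with a rate of at most $1-1/\beta$.)

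The bulk of the work is case~(iii), where $\alpha>1$. The idea is to pass to the auxiliary sequence $a_k:=s_k^{1-\alpha}$ (positive and well-defined once $s_k>0$, which may be assumed without loss of generality, as otherwise the sequence terminates and the bound is trivial) and to show that $a_{k+1}-a_k$ is bounded below by a positive constant. Applying the mean value theorem to $f(t):=t^{1-\alpha}$ on $[s_{k+1},s_k]$ produces some $\xi_k\in[s_{k+1},s_k]$ with
\[
a_{k+1}-a_k=(\alpha-1)\xi_k^{-\alpha}(s_k-s_{k+1}).
\]
Since $\xi_k\leq s_k$ implies $\xi_k^{-\alpha}\geq s_k^{-\alpha}$, combining with $s_k-s_{k+1}\geq s_k^{\alpha}/\beta$ from~\eqref{eq:ineq_seq} gives $a_{k+1}-a_k\geq(\alpha-1)/\beta$. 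Telescoping from some index $k_0$ yields $a_k\geq(\alpha-1)(k-k_0)/\beta$, and inverting the relation $a_k=s_k^{1-\alpha}$ produces $s_k\leq\eta k^{-1/(\alpha-1)}$ for a suitable $\eta>0$ and all $k$ large.

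The main obstacle is the telescoping step in case~(iii): the inequality $\xi_k^{-\alpha}\geq s_k^{-\alpha}$ is the crucial ingredient that converts the given bound on $s_k-s_{k+1}$ into a uniform additive increment for $a_k$, and the correct choice $a_k=s_k^{1-\alpha}$ is what makes the exponents cancel. Everything else—monotonicity, positivity, and the asymptotic inversion to recover $s_k$—is routine once that estimate is in hand.
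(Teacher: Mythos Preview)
Your proposal is correct and follows essentially the same route as the paper. The only cosmetic difference is in case~(iii): you use the mean value theorem applied to $t\mapsto t^{1-\alpha}$, while the paper bounds $(s_k-s_{k+1})s_k^{-\alpha}\le\int_{s_{k+1}}^{s_k}t^{-\alpha}\,dt$ via monotonicity of $t^{-\alpha}$; both yield the identical increment $s_{k+1}^{1-\alpha}-s_k^{1-\alpha}\ge(\alpha-1)/\beta$ and then telescope.
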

\begin{proof}
If~$\alpha=0$, then~\eqref{eq:ineq_seq} implies
\[
0\leq s_{k+1}\leq s_{k}-\frac{1}{\beta},
\]
 and~(i) follows.

Assume that~$\alpha\in(0,1]$. Since~$s_{k}\to0$, we have that~$s_{k}<1$
for all~$k$ large enough. Thus, by~\eqref{eq:ineq_seq}, we have
\[
s_{k}\leq s_{k}^{\alpha}\leq\beta(s_{k}-s_{k+1}).
\]
Therefore, $s_{k+1}\leq\left(1-\frac{1}{\beta}\right)s_{k}$; i.e.,
$\left\{ s_{k}\right\} $ converges linearly to~$0$ with rate~$1-\frac{1}{\beta}$.

Suppose now that~$\alpha>1$. If~$s_{k}=0$ for some~$k$, then~\eqref{eq:ineq_seq}
implies~$s_{k+1}=0$. Then the sequence converges to zero in a finite
number of steps, and thus~(iii) trivially holds. Hence, we will assume
that~$s_{k}>0$ and that~\eqref{eq:ineq_seq} holds for all~$k\geq N$,
for some positive integer~$N$. Consider the decreasing function
$\varphi:(0,+\infty)\to\mathbb{R}$ defined by $\varphi(s):=s^{-\alpha}$.
By~\eqref{eq:ineq_seq}, for $k\geq N$, we have
\[
\frac{1}{\beta}\leq\left(s_{k}-s_{k+1}\right)\varphi(s_{k})\leq\int_{s_{k+1}}^{s_{k}}\varphi(t)dt=\frac{s_{k+1}^{1-\alpha}-s_{k}^{1-\alpha}}{\alpha-1}.
\]
As~$\alpha-1>0$, this implies that
\[
s_{k+1}^{1-\alpha}-s_{k}^{1-\alpha}\geq\frac{\alpha-1}{\beta},
\]
for all $k\geq N$. Thus, summing for $k$ from $N$ to $j-1\geq N$,
we have
\[
s_{j}^{1-\alpha}-s_{N}^{1-\alpha}\geq\frac{\alpha-1}{\beta}(j-N),
\]
which gives, for all $j\geq N+1$,
\[
s_{j}\leq\left(s_{N}^{1-\alpha}+\frac{\alpha-1}{\beta}(j-N)\right)^{\frac{1}{1-\alpha}}.
\]
Therefore, there is some~$\eta>0$ such that
\[
s_{j}\leq\eta j^{-\frac{1}{\alpha-1}},\quad\text{for all }k\text{ sufficiently large,}
\]
which completes the proof.
\end{proof}
\begin{theorem}
\label{th:convergence_Lowasiewicz} Suppose that $\nabla g$ is locally
Lipschitz continuous and $\phi$ satisfies the \L{}ojasiewicz property
with exponent $\theta\in\left[0,1\right)$. For any $x_{0}\in\mathbb{R}^{m}$,
consider the sequence $\left\{ x_{k}\right\} $ generated by Algorithm~2.
If the sequence $\{x_{k}\}$ has a cluster point~$x^{*}$, then the
whole sequence converges to~$x^{*},$ which is a stationary point
of~$\left(\mathcal{P}\right)$. Moreover, denoting~$\phi^{*}:=\phi(x^{*})$,
the following estimations hold:
\begin{enumerate}
\item if $\theta=0$ then the sequences~$\{x_{k}\}$ and $\{\phi(x_{k})\}$
converge in a finite number of steps to~$x^{*}$ and $\phi^{*}$,
respectively;
\item if $\theta\in\left(0,\frac{1}{2}\right]$ then the sequences~$\{x_{k}\}$
and $\{\phi(x_{k})\}$ converge linearly to~$x^{*}$ and $\phi^{*}$,
respectively;
\item if $\theta\in\left(\frac{1}{2},1\right)$ then there exist some positive
constants $\eta_{1}$ and $\eta_{2}$ such that
\begin{gather*}
\|x_{k}-x^{*}\|\leq\eta_{1}k^{-\frac{1-\theta}{2\theta-1}},\\
\phi(x_{k})-\phi^{*}\leq\eta_{2}k^{-\frac{1}{2\theta-1}},
\end{gather*}
for all large~$k$. \end{enumerate}
\end{theorem}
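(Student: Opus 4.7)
The plan is to mimic the classical {\L}ojasiewicz-type argument as in \cite{attouch2009convergence}, adapted to the dynamics of Algorithm~2. The starting observation is that, by Proposition~\ref{prop:innerloop}, any cluster point $x^{*}$ is a stationary point of $(\mathcal{P})$; by continuity of~$\phi$ along the subsequence converging to~$x^{*}$ and the monotone convergence of $\{\phi(x_{k})\}$ to~$\phi^{*}$, we may take $\phi^{*}=\phi(x^{*})$. The two real goals are then (a) upgrading subsequential convergence to convergence of the \emph{whole} sequence, and (b) translating the {\L}ojasiewicz exponent into quantitative rates.

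The key identity is the optimality condition $\nabla g(y_{k})=\nabla h(x_{k})$ from the subproblem $(\mathcal{P}_{k})$. This gives
\[
\nabla\phi(x_{k})=\nabla g(x_{k})-\nabla h(x_{k})=\nabla g(x_{k})-\nabla g(y_{k}),
\]
so the assumed local Lipschitz continuity of $\nabla g$ near~$x^{*}$ yields $\|\nabla\phi(x_{k})\|\le L\|d_{k}\|$ for all~$k$ large enough (which is legitimate because the cluster-point hypothesis together with $\|d_{k}\|\to 0$ places both $x_{k}$ and $y_{k}$ arbitrarily close to~$x^{*}$ along the relevant subsequence). Combined with the {\L}ojasiewicz inequality at~$x^{*}$ and the descent estimate $\phi(x_{k})-\phi(x_{k+1})\ge\rho\|d_{k}\|^{2}$ coming from~\eqref{eq:ineq_descent_phi}, this delivers, for $s_{k}:=\phi(x_{k})-\phi^{*}$,
\[
s_{k}^{2\theta}\le (ML)^{2}\|d_{k}\|^{2}\le\frac{(ML)^{2}}{\rho}\,(s_{k}-s_{k+1}),
\]
which is exactly the hypothesis of Lemma~\ref{lem:rate_convergence} with exponent $\alpha=2\theta$. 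This already produces the three convergence regimes for $\{\phi(x_{k})\}$ claimed in (i)--(iii).

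The more delicate step is proving that the whole sequence $\{x_{k}\}$ converges to~$x^{*}$. For this I would use the standard concavity trick with $\varphi(s):=\frac{s^{1-\theta}}{1-\theta}$ (so that $\varphi'(s)=s^{-\theta}$). Concavity gives $\varphi(s_{k})-\varphi(s_{k+1})\ge s_{k}^{-\theta}(s_{k}-s_{k+1})$, and inserting the Lojasiewicz bound $s_{k}^{\theta}\le ML\|d_{k}\|$ together with $s_{k}-s_{k+1}\ge\rho\|d_{k}\|^{2}$ yields $\|d_{k}\|\le\frac{ML}{\rho}\bigl(\varphi(s_{k})-\varphi(s_{k+1})\bigr)$. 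Telescoping shows $\sum_{k}\|d_{k}\|<\infty$, hence $\sum_{k}\|x_{k+1}-x_{k}\|\le(1+\bar{\lambda})\sum_{k}\|d_{k}\|<\infty$, so $\{x_{k}\}$ is Cauchy and converges to its unique cluster point~$x^{*}$. A technical point I would need to be careful about here is the standard "trapping" argument: one must first restrict to an index $k_{0}$ past which all iterates remain in the {\L}ojasiewicz ball around~$x^{*}$, which is possible because the tail bound $\sum_{j\ge k}\|x_{j+1}-x_{j}\|\le C\,\varphi(s_{k})\to 0$ is used both to obtain convergence and to justify staying in the ball in a self-consistent way; this is the single spot where real care is needed.

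Finally, the rate on $\|x_{k}-x^{*}\|$ is read off from the same tail bound: $\|x_{k}-x^{*}\|\le(1+\bar{\lambda})\sum_{j\ge k}\|d_{j}\|\le\frac{ML(1+\bar{\lambda})}{\rho}\varphi(s_{k})$, so any convergence rate for $\{s_{k}\}$ transfers into a rate for $\{\|x_{k}-x^{*}\|\}$ through the map $s\mapsto s^{1-\theta}$. Thus: in case~(i), $s_{k}$ reaches $0$ in finitely many steps, forcing $d_{k}=0$ and termination; in case~(ii), $\theta\in(0,\tfrac{1}{2}]$ gives linear convergence of $\{s_{k}\}$, and $\varphi(s_{k})$ inherits a (geometric) linear rate, so $\|x_{k}-x^{*}\|$ is linearly convergent; in case~(iii), $s_{k}\le\eta k^{-1/(2\theta-1)}$ plus $\varphi(s_{k})\sim s_{k}^{1-\theta}$ yields the exponent $-(1-\theta)/(2\theta-1)$ for $\|x_{k}-x^{*}\|$ and $-1/(2\theta-1)$ for $\phi(x_{k})-\phi^{*}$, matching the statement.
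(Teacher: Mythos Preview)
Your proposal is correct and follows essentially the same approach as the paper: the optimality identity $\nabla\phi(x_{k})=\nabla g(x_{k})-\nabla g(y_{k})$, the local Lipschitz bound $\|\nabla\phi(x_{k})\|\le L\|d_{k}\|$, the concavity trick with $t\mapsto t^{1-\theta}$, the inductive trapping argument to keep all iterates in the {\L}ojasiewicz ball, and the application of Lemma~\ref{lem:rate_convergence} with exponent $2\theta$ for the $\{\phi(x_{k})\}$ rates all match the paper exactly. The one minor difference is in the last step: the paper obtains the $\{x_{k}\}$ rates by showing that the tail sum $S_{i}:=\sum_{k\ge i}\|x_{k+1}-x_{k}\|$ satisfies $S_{i}^{\theta/(1-\theta)}\le K_{2}(S_{i}-S_{i+1})$ and re-applies Lemma~\ref{lem:rate_convergence} with exponent $\theta/(1-\theta)$, whereas you transfer the already-established $\phi$-rate directly through the bound $\|x_{k}-x^{*}\|\le C\,(\phi(x_{k})-\phi^{*})^{1-\theta}$; both routes yield the same exponents, and yours is slightly more economical.
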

\begin{proof}
By Proposition \ref{prop:innerloop}, we have $\lim_{k\to\infty}\phi(x_{k})=\phi^{*}$
. If $x^{*}$ is a cluster point of~$\left\{ x_{k}\right\} $, then
there exists a subsequence $\{x_{k_{i}}\}$ of $\{x_{k}\}$ that converges
to $x^{*}$. By continuity of $\phi$, we have that
\[
\phi(x^{*})=\lim_{i\to\infty}\phi(x_{k_{i}})=\lim_{k\to\infty}\phi(x_{k})=\phi^{*}.
\]
Hence, $\phi$ is finite and has the same value~$\phi^{*}$ at every
cluster point of $\{x_{k}\}$. If $\phi(x_{k})=\phi^{*}$ for some
$k>1$, then $\phi(x_{k})=\phi(x_{k+p})$ for any $p\geq0$, since
the sequence $\phi(x_{k})$ is decreasing. Therefore, $x_{k}=x_{k+p}$
for all $p\geq0$ and Algorithm 2 terminates after a finite number
of steps. From now on, we assume that $\phi(x_{k})>\phi^{*}$ for
all $k$.

As~$\phi$ satisfies the \L{}ojasiewicz property, there exist $M>0,\varepsilon_{1}>0$
and $\theta\in[0,1)$ such that
\begin{equation}
|\phi(x)\text{\textminus}\phi(x^{*})|^{\theta}\text{\ensuremath{\le}}M\left\Vert \nabla\phi(x)\right\Vert ,\quad\forall x\in\mathbb{B}(x^{*},\varepsilon_{1}).\label{LojasiewiczProperty}
\end{equation}
Further, as $\nabla g$ is locally Lipschitz around $x^{*}$, there
are some constants $L\geq0$ and $\varepsilon_{2}>0$ such that
\begin{equation}
\|\nabla g(x)-\nabla g(y)\|\leq L\|x-y\|,\quad\forall x,y\in\mathbb{B}(x^{*},\varepsilon_{2}).\label{eq:nabla_g_Lip}
\end{equation}
Let $\varepsilon:=\frac{1}{2}\min\left\{ \varepsilon_{1},\varepsilon_{2}\right\} >0$.
Since $\lim_{i\to\infty}x_{k_{i}}=x^{*}$ and $\lim_{i\to\infty}\phi(x_{k_{i}})=\phi^{*}$,
we can find an index $N$ large enough such that
\begin{equation}
\|x_{N}-x^{*}\|+\frac{ML\left(1+\bar{\lambda}\right)}{(1-\theta)\rho}\left(\phi(x_{N})-\phi^{*}\right)^{1-\theta}<\varepsilon.\label{eq:BallCondition}
\end{equation}
By Proposition~\ref{prop:innerloop}(iii), we know that $d_{k}=y_{k}-x_{k}\to0$.
Then, taking a larger $N$ if needed, we can assure that
\[
\|y_{k}-x_{k}\|\leq\varepsilon,\quad\forall k\geq N.
\]
We now prove that, for all $k\geq N$, whenever $x_{k}\in\mathbb{B}(x^{*},\varepsilon)$
it holds
\begin{align}
\|x_{k+1}-x_{k}\| & \leq\frac{ML\left(1+\lambda_{k}\right)}{(1-\theta)(\alpha\lambda_{k}+\rho)}\left[\left(\phi(x_{k})-\phi^{*}\right)^{1-\theta}-\left(\phi(x_{k+1})-\phi^{*}\right)^{1-\theta}\right]\label{eq:StepRelation}
\end{align}
 Indeed, consider the concave function $\gamma:(0,+\infty)\to(0,+\infty)$
defined as $\gamma(t):=t^{1-\theta}$. Then, we have
\[
\gamma(t_{1})-\gamma(t_{2})\geq\nabla\gamma(t_{1})^{T}(t_{1}-t_{2}),\quad\forall t_{1},t_{2>0.}
\]
 Substituting in this inequality $t_{1}$ by $\left(\phi(x_{k})-\phi^{*}\right)$
and $t_{2}$ by $\left(\phi(x_{k+1})-\phi^{*}\right)$ and using~\eqref{LojasiewiczProperty}
and then~\eqref{eq:ineq_descent_phi}, one has
\begin{align}
\left(\phi(x_{k})-\phi^{*}\right)^{1-\theta}-\left(\phi(x_{k+1})-\phi^{*}\right)^{1-\theta} & \geq\frac{1-\theta}{\left(\phi(x_{k})-\phi^{*}\right)^{\theta}}\left(\phi(x_{k})-\phi(x_{k+1})\right)\nonumber \\
 & \geq\frac{1-\theta}{M\left\Vert \nabla\phi(x_{k})\right\Vert }\left(\alpha\lambda_{k}+\rho\right)\|y_{k}-x_{k}\|^{2}\nonumber \\
 & =\frac{\left(1-\theta\right)\left(\alpha\lambda_{k}+\rho\right)}{M\left(1+\lambda_{k}\right)^{2}\left\Vert \nabla\phi(x_{k})\right\Vert }\|x_{k+1}-x_{k}\|^{2}.\label{eq:LojasiewiczInequality}
\end{align}
 On the other hand, since $\nabla g(y_{k})=\nabla h(x_{k})$ and
\[
\|y_{k}-x^{*}\|\leq\|y_{k}-x_{k}\|+\|x_{k}-x^{*}\|\leq2\varepsilon\leq\varepsilon_{2},
\]
using~\eqref{eq:nabla_g_Lip}, we obtain
\begin{align}
\left\Vert \nabla\phi(x_{k})\right\Vert  & =\left\Vert \nabla g(x_{k})-\nabla h(x_{k})\right\Vert =\left\Vert \nabla g(x_{k})-\nabla g(y_{k})\right\Vert \nonumber \\
 & \leq L\left\Vert x_{k}-y_{k}\right\Vert =\frac{L}{(1+\lambda_{k})}\left\Vert x_{k+1}-x_{k}\right\Vert .\label{eq:ineq2}
\end{align}
 Combining~\eqref{eq:LojasiewiczInequality} and~\eqref{eq:ineq2},
we obtain (\ref{eq:StepRelation}).

From~(\ref{eq:StepRelation}), as $\lambda_{k}\in(0,\bar{\lambda]}$,
we deduce
\begin{equation}
\|x_{k+1}-x_{k}\|\leq\frac{ML\left(1+\bar{\lambda}\right)}{(1-\theta)\rho}\left[\left(\phi(x_{k})-\phi^{*}\right)^{1-\theta}-\left(\phi(x_{k+1})-\phi^{*}\right)^{1-\theta}\right],\label{eq:StepRelation2}
\end{equation}
for all $k\geq N$ such that $x_{k}\in\mathbb{B}(x^{*},\varepsilon).$

We prove by induction that $x_{k}\in\mathbb{B}(x^{*},\varepsilon)$
for all $k\geq N$. Indeed, from (\ref{eq:BallCondition}) the claim
holds for $k=N$. We suppose that it also holds for $k=N,N+1,\ldots,N+p-1$,
with~$p\geq1$. Then~(\ref{eq:StepRelation2}) is valid for $k=N,N+1,\ldots,N+p-1$.
Therefore
\begin{align*}
\left\Vert x_{N+p}-x^{*}\right\Vert  & \leq\left\Vert x_{N}-x^{*}\right\Vert +\sum_{i=1}^{p}\left\Vert x_{N+i}-x_{N+i-1}\right\Vert \\
 & \leq\left\Vert x_{N}-x^{*}\right\Vert +\frac{ML\left(1+\bar{\lambda}\right)}{(1-\theta)\rho}\sum_{i=1}^{p}\left[\left(\phi(x_{N+i-1})-\phi^{*}\right)^{1-\theta}-\left(\phi(x_{N+i})-\phi^{*}\right)^{1-\theta}\right]\\
 & \leq\left\Vert x_{N}-x^{*}\right\Vert +\frac{ML\left(1+\bar{\lambda}\right)}{(1-\theta)\rho}\left(\phi(x_{N})-\phi^{*}\right)^{1-\theta}<\varepsilon,
\end{align*}
 where the last inequality follows from (\ref{eq:BallCondition}).

Adding (\ref{eq:StepRelation2}) from $k=N$ to $P$ one has
\begin{equation}
\sum_{k=N}^{P}\|x_{k+1}-x_{k}\|\leq\frac{ML\left(1+\bar{\lambda}\right)}{(1-\theta)\rho}\left(\phi(x_{N})-\phi^{*}\right)^{1-\theta}.\label{eq:bound_finite_sum}
\end{equation}
Taking the limit as $P\to\infty$, we can conclude that
\begin{equation}
\sum_{k=1}^{\infty}\|x_{k+1}-x_{k}\|<\infty.\label{eq:finite_sum}
\end{equation}
This means that $\{x_{k}\}$ is a Cauchy sequence. Therefore, since~$x^{*}$
is a cluster point of~$\{x_{k}\}$, the whole sequence $\{x_{k}\}$
converges to~$x^{*}$. By Proposition \ref{prop:innerloop}, $x^{*}$
must be a stationary point of~$\left(\mathcal{P}\right)$.

For $k\geq N$, it follows from \eqref{LojasiewiczProperty},~\eqref{eq:nabla_g_Lip}
and~\eqref{eq:ineq_descent_phi} that
\begin{align}
(\phi(x_{k})-\phi^{*})^{2\theta} & \leq M^{2}\left\Vert \nabla\phi(x_{k})\right\Vert ^{2}\nonumber \\
 & \leq M^{2}\left\Vert \nabla g(x_{k})-\nabla h(x_{k})\right\Vert ^{2}=M^{2}\left\Vert \nabla g(x_{k})-\nabla g(y_{k})\right\Vert ^{2}\nonumber \\
 & \leq M^{2}L^{2}\left\Vert x_{k}-y_{k}\right\Vert ^{2}\leq\frac{M^{2}L^{2}}{\alpha\lambda_{k}+\rho}\left[\phi(x_{k})-\phi(x_{k+1})\right]\nonumber \\
 & \leq\delta\left[\left(\phi(x_{k})-\phi^{*}\right)-\left(\phi(x_{k+1})-\phi^{*}\right)\right],\label{eq:main_ineq_conv_rate}
\end{align}
where $\delta:=\frac{M^{2}L^{2}}{\rho}>0$. By applying Lemma~\ref{lem:rate_convergence}
with~$s_{k}:=\phi(x_{k})-\phi^{*}$, $\alpha:=2\theta$ and~$\beta:=\delta$,
statements~(i)-(iii) regarding the sequence~$\left\{ \phi(x_{k})\right\} $
easily follow from~\eqref{eq:main_ineq_conv_rate}.

We know that $s_{i}:=\sum_{k=i}^{\infty}\|x_{k+1}-x_{k}\|$ is finite
by~\eqref{eq:finite_sum}. Notice that~$\|x_{i}-x^{*}\|\leq s_{i}$
by the triangle inequality. Therefore, the rate of convergence of~$x_{i}$
to $x^{*}$ can be deduced from the convergence rate of $s_{i}$ to~0.
Adding~\eqref{eq:StepRelation2} from~$i$ to~$P$ with $N\leq i\leq P$,
we have
\[
s_{i}=\lim_{P\to\infty}\sum_{k=i}^{P}\|x_{k+1}-x_{k}\|\leq K_{1}\left(\phi(x_{i})-\phi^{*}\right)^{1-\theta},
\]
where~$K_{1}:=\frac{ML\left(1+\bar{\lambda}\right)}{\left(1-\theta\right)\rho}>0$.
Then by~\eqref{LojasiewiczProperty} and~\eqref{eq:nabla_g_Lip},
we get
\begin{align*}
s_{i}^{\frac{\theta}{1-\theta}} & \leq MK_{1}^{\frac{\theta}{1-\theta}}\|\nabla\phi(x_{i})\|\leq MLK_{1}^{\frac{\theta}{1-\theta}}\|x_{i}-y_{i}\|\\
 & \leq\frac{MLK_{1}^{\frac{\theta}{1-\theta}}}{1+\lambda_{i}}\|x_{i+1}-x_{i}\|\leq MLK_{1}^{\frac{\theta}{1-\theta}}\|x_{i+1}-x_{i}\|\\
 & =MLK_{1}^{\frac{\theta}{1-\theta}}\left(s_{i}-s_{i+1}\right)
\end{align*}
Hence, taking $K_{2}:=MLK_{1}^{\frac{\theta}{1-\theta}}>0$, for all~$i\geq N$
we have
\[
s_{i}^{\frac{\theta}{1-\theta}}\leq K_{2}\left(s_{i}-s_{i+1}\right).
\]
By applying Lemma~\ref{lem:rate_convergence} with~$\alpha:=\frac{\theta}{1-\theta}$
and~$\beta:=K_{2}$, we see that the statements in~(i)-(iii) regarding
the sequence~$\left\{ x_{k}\right\} $ hold.
\end{proof}

\begin{example}
\label{ex:1}Consider the function $\phi(x)=\frac{1}{4}x^{4}-\frac{1}{2}x^{2}$.
The iteration given by DCA (Algorithm~1) satisfies
\[
x_{k+1}^{3}-x_{k}=0;
\]
that is, $x_{k+1}=\sqrt[3]{x_{k}}$. On the other hand, the iteration
defined by Algorithm~2 is
\[
\widetilde{x_{k+1}}=(1+\lambda_{k})\sqrt[3]{\widetilde{x_{k}}}-\lambda_{k}\widetilde{x_{k}}.
\]
If $x_{0}=\widetilde{x_{0}}=\frac{27}{125}$, we have $x_{1}=\frac{3}{5}$,
while $\widetilde{x_{1}}=\frac{3}{5}(1+\lambda_{0})-\frac{27}{125}\lambda_{0}$.
For any $\lambda_{0}\in\left(0,\frac{25\sqrt{41}-75}{48}\right]$,
we have $\phi(\widetilde{x_{1}})<\phi(x_{1})$. The optimal step size
is attained at $\lambda_{{\rm opt}}=\frac{25}{24}$ with $x_{1}=1$,
which is the global minimiser of~$\phi$.

\begin{figure}[H]
\begin{centering}
\includegraphics[width=0.7\textwidth]{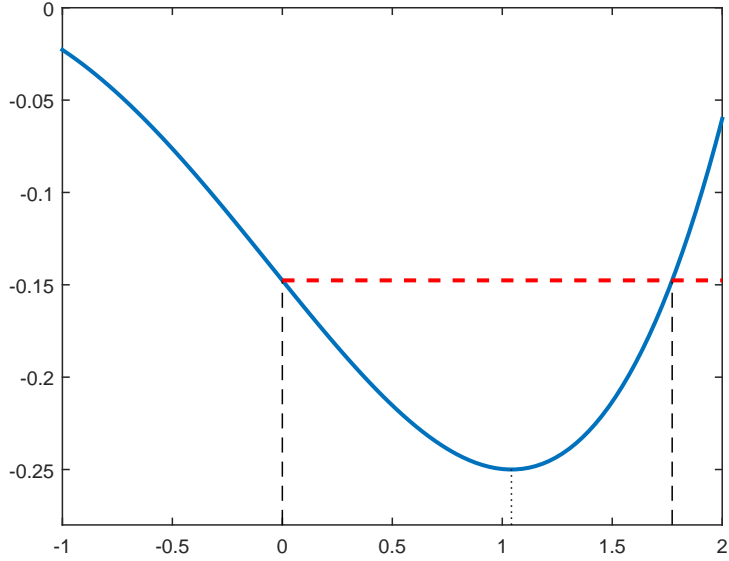}
\par\end{centering}

\protect\caption{\label{fig:ex1}Plot of $\phi\left(\frac{3}{5}(1+\lambda)-\frac{27}{125}\lambda\right)$
for $\phi(x)=\frac{1}{4}x^{4}-\frac{1}{2}x^{2}$. As shown in Proposition~\ref{prop:descent_direction},
the function is decreasing at~0. The value $\lambda=0$ corresponds
to the next iteration chosen by DCA, while the next iteration chosen
by algorithm~\cite{fukushima_generalized_1981} sets $\lambda\in\,]-1,0]$.
Algorithm~2 chooses the next iteration taking~$\lambda\in\,]0,\bar{\lambda}]$,
which permits to achieve an additional decrease in the value of~$\phi$.
Here, the optimal value is attained at~$\lambda_{{\rm opt}}=\frac{25}{24}\approx1.04.$}
\end{figure}
Observe in Figure~\ref{fig:ex1} that the function
\[
\phi_{k}(\lambda):=\phi(y_{k}+\lambda d_{k})
\]
behaves as a quadratic function nearby 0. Then, a quadratic interpolation
of this function should give us a good candidate for choosing a step
size close to the optimal one. Whenever $\nabla\phi$ is not \emph{too
expensive} to compute, it makes sense to construct a quadratic approximation
of $\phi$ with an interpolation using three pieces of information:
$\phi_{k}(0)=\phi(y_{k})$, $\phi_{k}'(0)=\nabla\phi(y_{k})^{T}d_{k}$
and $\phi_{k}(\bar{\lambda})$. This gives us the quadratic function
\begin{equation}
\varphi_{k}(\lambda):=\left(\frac{\phi_{k}(\bar{\lambda})-\phi_{k}(0)-\bar{\lambda}\phi_{k}'(0)}{\bar{\lambda}^{2}}\right)\lambda^{2}+\phi_{k}'(0)\lambda+\phi_{k}(0),\label{eq:quadratic_interp}
\end{equation}
see e.g.~\cite[Section~3.5]{nocedal_numerical_2006}. When $\phi_{k}(\bar{\lambda})>\phi_{k}(0)+\bar{\lambda}\phi_{k}'(0)$,
the function $\varphi_{k}$ has a global minimiser at
\begin{equation}
\widehat{\lambda_{k}}:=-\frac{\phi_{k}'(0)\bar{\lambda}^{2}}{2\left(\phi_{k}(\bar{\lambda})-\phi_{k}(0)-\phi_{k}'(0)\bar{\lambda}\right)}.\label{eq:min_interp}
\end{equation}

This suggests the following modification of Algorithm~2.
\end{example}
\begin{center}
\bgroup 	\renewcommand\theenumi{\arabic{enumi}.} 	 \renewcommand\labelenumi{\theenumi}
\fbox{%
\begin{minipage}[t]{.98\textwidth}%
\textbf{ALGORITHM 3:} (BDCA-Quadratic Interpolation with Backtracking)
\begin{enumerate}
\item Fix $\alpha>0$, $\lambda_{\max}>\bar{\lambda}>0$ and $0<\beta<1$.
Let $x_{0}$ be any initial point and set $k:=0$.
\item Solve the strongly convex minimisation problem
\[
\left(\mathcal{P}_{k}\right)\underset{x\in\mathbb{R}^{m}}{\textrm{\;\ minimise}}\; g(x)-\langle\nabla h(x_{k}),x\rangle
\]
 to obtain the unique solution $y_{k}$.
\item Set $d_{k}:=y_{k}-x_{k}$. If $d_{k}=0$ STOP and RETURN $x_{k}$.
Otherwise, go to Step~4.
\item Compute $\widehat{\lambda_{k}}$ as in~\eqref{eq:min_interp}. If~$\widehat{\lambda_{k}}>0$
and $\phi(y_{k}+\widehat{\lambda_{k}}d_{k})<\phi(y_{k}+\bar{\lambda}d_{k})$
set $\lambda_{k}:=\min\left\{ \widehat{\lambda_{k}},\lambda_{\max}\right\} $;
otherwise, set $\lambda_{k}:=\bar{\lambda}$. \\
WHILE $\phi(y_{k}+\lambda_{k}d_{k})>\phi(y_{k})-\alpha\lambda_{k}\|d_{k}\|^{2}$
DO $\lambda_{k}:=\beta\lambda_{k}$.
\item Set $x_{k+1}:=y_{k}+\lambda_{k}d_{k}$. If $x_{k+1}=x_{k}$ then STOP
and RETURN $x_{k}$, otherwise set $k:=k+1$, and go to Step~2.\medskip{}
 \end{enumerate}
\end{minipage}}
\egroup{}
\end{center}
\begin{corollary}
\label{cor:convergence_Loj}The statements in Theorem~\ref{th:convergence_Lowasiewicz}
also apply to Algorithm~3.\end{corollary}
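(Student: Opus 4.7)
The plan is to observe that Algorithm~3 differs from Algorithm~2 only in the \emph{initialisation} of the trial step size $\lambda_{k}$ in Step~4: the quadratic-interpolation rule~\eqref{eq:min_interp} may start the backtracking from $\min\{\widehat{\lambda_{k}},\lambda_{\max}\}$ rather than from~$\bar{\lambda}$. In either case, after the backtracking loop one still has
\[
\phi(y_{k}+\lambda_{k}d_{k})\leq\phi(y_{k})-\alpha\lambda_{k}\|d_{k}\|^{2}\quad\text{with }\lambda_{k}\in(0,\lambda_{\max}],
\]
so both algorithms produce iterates satisfying the same Armijo-type decrease, differing only in the upper bound~$\lambda_{\max}$ instead of~$\bar{\lambda}$.

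First I would note that Proposition~\ref{prop:main_inequality} and Proposition~\ref{prop:descent_direction} hold verbatim, since both depend only on the DCA subproblem iterate~$y_{k}$, which is computed identically in the two algorithms. The corollary guaranteeing finite termination of the backtracking loop also transfers unchanged: its proof uses only the continuity of $\nabla\phi$ at~$y_{k}$ together with~\eqref{eq:descent_direction}, and is insensitive to the initial trial value of~$\lambda$. Hence Step~4 of Algorithm~3 terminates in finitely many steps whenever~$\rho>\alpha$.

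Next I would re-examine Proposition~\ref{prop:innerloop}. The monotone descent~\eqref{eq:phi_decreasing}, the telescoping bound~\eqref{eq:bound_sum} and the summability $\sum_{k}\|d_{k}\|^{2}<\infty$ rely only on Proposition~\ref{prop:main_inequality} and on the Armijo inequality, both of which are available. The only appearance of~$\bar{\lambda}$ is in
\[
\sum_{k=0}^{\infty}\|x_{k+1}-x_{k}\|^{2}=\sum_{k=0}^{\infty}(1+\lambda_{k})^{2}\|d_{k}\|^{2},
\]
which for Algorithm~3 is bounded by $(1+\lambda_{\max})^{2}\sum_{k}\|d_{k}\|^{2}<\infty$. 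Thus items~(i)--(iii) of Proposition~\ref{prop:innerloop} carry over.

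Finally I would rerun the proof of Theorem~\ref{th:convergence_Lowasiewicz} line by line, replacing every instance of~$\bar{\lambda}$ by~$\lambda_{\max}$. The estimates~\eqref{eq:BallCondition}, \eqref{eq:StepRelation2} and~\eqref{eq:bound_finite_sum} become identical inequalities with the larger constant $ML(1+\lambda_{\max})/((1-\theta)\rho)$; the induction showing $x_{k}\in\mathbb{B}(x^{*},\varepsilon)$, the Cauchy property~\eqref{eq:finite_sum}, the convergence of $\{x_{k}\}$ to the stationary point~$x^{*}$, and both applications of Lemma~\ref{lem:rate_convergence} (first with $s_{k}=\phi(x_{k})-\phi^{*}$, then with $s_{i}=\sum_{k\geq i}\|x_{k+1}-x_{k}\|$) all go through unchanged. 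The only (minor) obstacle is the bookkeeping: one must verify that no step in the original proof used the specific fact that $\lambda_{k}$ was obtained by backtracking from~$\bar{\lambda}$, rather than merely the bound $\lambda_{k}\in(0,\lambda_{\max}]$ together with the Armijo inequality. Once this is checked, items~(i)--(iii) of Theorem~\ref{th:convergence_Lowasiewicz} transfer directly to Algorithm~3.
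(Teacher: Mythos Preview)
Your proposal is correct and takes essentially the same approach as the paper: both observe that the proof of Theorem~\ref{th:convergence_Lowasiewicz} uses only the Armijo decrease together with a uniform upper bound on the step sizes, so replacing~$\bar{\lambda}$ by~$\lambda_{\max}$ throughout yields the result for Algorithm~3. The paper simply states this in two sentences, whereas you spell out the verification step by step.
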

\begin{proof}
Just observe that the proof of Theorem~\ref{th:convergence_Lowasiewicz}
remains valid as long as the step sizes are bounded above by some
constant and below by zero. Algorithm~3 uses the same directions
than Algorithm~2, and the step sizes chosen by Algorithm~3 are bounded
above by~$\lambda_{\max}$ and below by zero.
\end{proof}
Another option here would be to construct a quadratic approximation
$\psi_{k}$ using $\phi_{k}(-1)=\phi(x_{k})$ instead of $\phi_{k}(\bar{\lambda})$.
This interpolation is computationally less expensive, as it does not
require the computation of $\phi_{k}(\bar{\lambda})$. Nevertheless,
our numerical tests for the functions in Section~\ref{sec:A-DC-problem}
show that this approximation usually fits the function $\phi_{k}$
more poorly. In particular, this situation occurs in Example~\ref{ex:1},
as shown in Figure~\ref{fig:quadratic_interp}.
\begin{figure}[H]
\centering{}\includegraphics[width=0.64\textwidth]{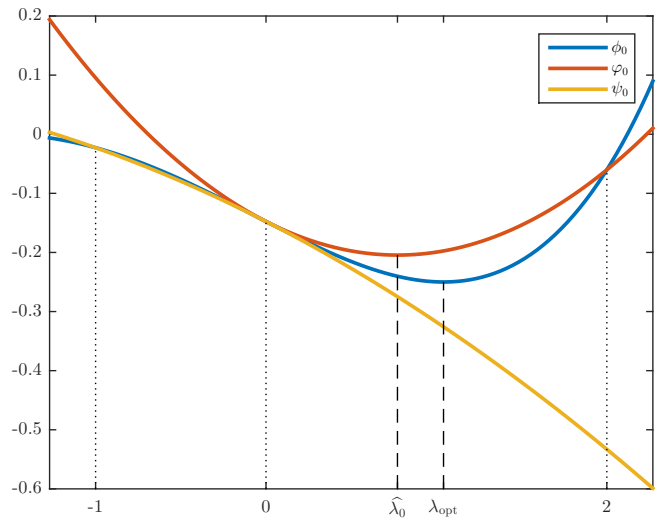}\protect\caption{\label{fig:quadratic_interp}Plots of the quadratic interpolations
$\varphi_{0}$ and $\psi_{0}$ for the function $\phi_{0}(\lambda)=\phi(y_{0}+\lambda d_{0})$
from Example~\ref{ex:1}, with~$\bar{\lambda}=2$. Note that $\psi_{0}(\lambda)$
fits poorly $\phi_{0}(\lambda)$ for $\lambda>0$.}
\end{figure}

One could also construct a cubic function that interpolates $\phi_{k}(-1)$,
$\phi_{k}(0)$, $\phi_{k}'(0)$ and $\phi_{k}(\bar{\lambda})$, see~\cite[Section~3.5]{nocedal_numerical_2006}.
However, for the functions in Section~\ref{sec:A-DC-problem}, we
have observed that this cubic function usually fits the function $\phi_{k}$
worse than the quadratic function $\varphi_{k}$ in~\eqref{eq:quadratic_interp}.
\begin{remark}
Observe that Algorithm 2 and Algorithm 3 still work well if we replace
Step~2 by the following proximal step as in~\cite{moudafi_proximalDC_2006}
\[
\left(\mathcal{P}_{k}\right)\underset{x\in\mathbb{R}^{m}}{\textrm{\;\ minimise}}\; g(x)-\langle\nabla h(x_{k}),x\rangle+\frac{1}{2c_{k}}\|x-x_{k}\|^{\text{2}},
\]
for some positive constants~$c_{k}$.\end{remark}
\begin{example}
(Finding zeroes of systems of DC functions)\label{ex:Finding-zeroes}

Suppose that one wants for find a zero of a system of equations
\begin{equation}
p(x)=c(x),\quad x\in\mathbb{R}^{m}\label{eq:system_eq}
\end{equation}
where $p:\mathbb{R}^{m}\to\mathbb{R}_{+}^{m}$ and $c:\mathbb{R}^{m}\to\mathbb{R}_{+}^{m}$
are twice continuously differentiable functions such that $p_{i}:\mathbb{R}^{m}\to\mathbb{R}_{+}$
and $c_{i}:\mathbb{R}^{m}\to\mathbb{R}_{+}$ are convex functions
for all $i=1,\ldots,m$. Then,
\[
\|p(x)-c(x)\|^{\text{2}}=2\left(\|p(x)\|^{\text{2}}+\|c(x)\|^{\text{2}}\right)-\|p(x)+c(x)\|^{\text{2}}.
\]
Observe that all the components of $p(x)$ and $c(x)$ are nonnegative
convex functions. Hence, both $f_{1}(x):=2\left(\|p(x)\|^{\text{2}}+\|c(x)\|^{\text{2}}\right)$
and $f_{2}(x):=\|p(x)+c(x)\|^{\text{2}}$ are continuously differentiable
convex functions, because they can be expressed as a finite combination
of sums and products of nonnegative convex functions. Thus, we can
either apply DCA or BDCA in order to find a solution to~\eqref{eq:system_eq}
by setting $\phi(x):=f_{1}(x)-f_{2}(x).$

Let~$f(x):=p(x)-c(x)$ for $x\in\mathbb{R}^{m}$. Suppose that $\bar{x}$
is an accumulation point of the sequence $\left\{ x_{k}\right\} $
generated by either Algorithm~2 or Algorithm~3, and assume that~\emph{$\nabla f(\bar{x})$
}is nonsingular. Then, by Proposition~\ref{prop:innerloop}, we must
have $\nabla f(\bar{x})f(\bar{x})=0_{m}$, which implies that~$f(\bar{x})=0_{m}$,
as\emph{~$\nabla f(\bar{x})$ }is nonsingular. Moreover, for all~$x$ close to~$\bar{x}$, we have
\begin{align*}
|\phi(x)-\phi(\bar{x})|^{\frac{1}{2}} & =\phi(x)^{\frac{1}{2}}=\|f(x)\|\\
&=\left\Vert \left(\nabla f(x)\right)^{-1}\nabla f(x)f(x)\right\Vert\\
 &\leq\left\Vert \left(\nabla f(x)\right)^{-1}\right\Vert \left\Vert \nabla f(x)f(x)\right\Vert\\ &=\frac{1}{2}\left\Vert \left(\nabla f(x)\right)^{-1}\right\Vert \|\nabla\phi(x)\|\\
 & \leq M\|\nabla\phi(x)\|,
\end{align*}
 where~$\|\cdot\|$ also denote
the induced matrix norm and $M$ is an upper bound of $\frac{1}{2}\|\left(\nabla f(x)\right)^{-1}\|$
around~$\bar{x}$. Thus,~$\phi$ has the \L{}ojasiewicz property
at~$\bar{x}$ with exponent~$\theta=\frac{1}{2}$. Finally, for
all $\rho>0$, the function $g(x):=f_{1}(x)+\frac{\rho}{2}\|x\|^{2}$
is twice continuously differentiable, which in particular implies
that $\nabla g$ is locally Lipschitz continuous. Therefore, either
Theorem~\ref{th:convergence_Lowasiewicz} or Corollary~\ref{cor:convergence_Loj}
guarantee the linear convergence of~$\left\{ x_{k}\right\} $ to~$\bar{x}$.
\end{example}
\newpage

\section{\noindent A DC problem in biochemistry\label{sec:A-DC-problem}}

Consider a biochemical network with $m$ molecular species and $n$
reversible elementary reactions%
\footnote{An elementary reaction is a chemical reaction for which no intermediate
molecular species need to be postulated in order to describe the chemical
reaction on a molecular scale.%
}. Define forward and reverse \emph{stoichiometric matrices}, $F,R\in\mathbb{\mathbb{Z}}_{\ge0}^{m\times n}$,
respectively, where $F_{ij}$ denotes the \emph{stoichiometry}%
\footnote{Reaction stoichiometry is a quantitative relationship between the
relative quantities of molecular species involved in a single chemical
reaction. %
} of the $i^{th}$ molecular species in the $j^{th}$ forward reaction
and $R_{ij}$ denotes the stoichiometry of the $i^{th}$ molecular
species in the $j^{th}$ reverse reaction. We use the standard inner
product in $\mathbb{R}^{m}$, i.e., $\langle x,y\rangle=x^{T}y$ for
all $x,y\in\mathbb{R}^{m}.$ We assume that \emph{every reaction conserves
mass}, that is, there exists at least one positive vector $l\in\mathbb{R}_{>0}^{m}$
satisfying $(R-F)^{T}l=0_{n}$ \cite{gevorgyan2008detection} where
$R-F$ represents net reaction stoichiometry. We assume the cardinality%
\footnote{By cardinality we mean the number of nonzero components.%
} of each row of $F$ and $R$ is at least one, and the cardinality
of each column of $R-F$ is at least two, usually three. Therefore,
$R-F$ may be viewed as the incidence matrix of a directed hypergraph.
The matrices $F$ and $R$ are sparse and the particular sparsity
pattern depends on the particular biochemical network being modelled.

Let $u\in\mathbb{R}_{>0}^{m}$ denote a variable vector of molecular
species concentrations. Assuming constant nonnegative elementary kinetic
parameters $k_{f},k_{r}\in\mathbb{R}_{\geq0}^{n}$, we presume \emph{elementary
reaction kinetics} for forward and reverse elementary reaction rates
as $s(k_{f},u):=\exp(\ln(k_{f})+F^{T}\ln(u))$ and $r(k_{r},u):=\exp(\ln(k_{r})+R^{T}\ln(u))$,
respectively, where $\exp(\cdot)$ and $\ln(\cdot)$ denote the respective
componentwise functions. Then, the deterministic dynamical equation
for time evolution of molecular species concentration is given by
\begin{eqnarray}
\frac{du}{dt} & \equiv & (R-F)(s(k_{f},u)-r(k_{r},u))\label{eq:dcdt1}\\
 & = & (R-F)(\exp(\ln(k_{f})+F^{T}\ln(u))-\exp(\ln(k_{r})+R^{T}\ln(u))).\label{eq:dcdt2}
\end{eqnarray}
Investigation of steady states plays a crucial role in the modelling
of biochemical reaction systems. If one transforms \eqref{eq:dcdt2}
to logarithmic scale, by letting $w\equiv[\ln(k_{f})^{T},\,\ln(k_{r})^{T}]^{T}\in\mathbb{R}^{2n}$, $x\equiv\ln(u)\in\mathbb{R}^{m}$,
then, up to a sign, the right-hand side of \eqref{eq:dcdt2} is equal
to the function
\begin{equation}
f(x):=([F,\, R]-[R,\, F])\exp\left(w+[F,\, R]^{T}x\right),\label{eq:f(x)}
\end{equation}
where $\left[\,\cdot\thinspace,\cdot\,\right]$ stands for the horizontal
concatenation operator. Thus, we shall focus on finding the points
$x\in\mathbb{R}^{m}$ such that $f(x)=0_m$, which correspond to the
steady states of the dynamical equation~\eqref{eq:dcdt1}.

A point $\bar{x}$ will be a zero of the function $f$ if and only
if $\|f(\bar{x})\|^{2}=0$. Denoting
\begin{align*}
p(x) & :=[F,R]\exp\left(w+[F,R]^{T}x\right),\\
c(x) & :=[R,F]\exp\left(w+[F,R]^{T}x\right),
\end{align*}
one obtains, as in Example~\ref{ex:Finding-zeroes},
\[
\|f(x)\|^{2}=\|p(x)-c(x)\|^{\text{2}}=2\left(\|p(x)\|^{\text{2}}+\|c(x)\|^{\text{2}}\right)-\|p(x)+c(x)\|^{\text{2}}.
\]
Again, as all the components of $p(x)$ and $c(x)$ are positive and
convex functions%
\footnote{Note that $p(x)$ is the rate of production of each molecule and $c(x)$
is the rate of consumption of each molecule.%
}, both
\begin{equation}\label{eq:f1andf2}
f_{1}(x):=2\left(\|p(x)\|^{\text{2}}+\|c(x)\|^{\text{2}}\right)\quad\text{
and} \quad f_{2}(x):=\|p(x)+c(x)\|^{\text{2}}
\end{equation} are convex functions. In
addition to this, both $f_{1}$ and $f_{2}$ are smooth, having
\begin{eqnarray*}
\nabla f_{1}(x) & = & 4\nabla p(x)p(x)+4\nabla c(x)c(x),\\
\nabla f_{2}(x) & = & 2\left(\nabla p(x)+\nabla c(x)\right)\left(p(x)+c(x)\right),
\end{eqnarray*}
see e.g.~\cite[pp.~245--246]{nocedal_numerical_2006}, with
\begin{eqnarray*}
\nabla p(x) & = & [F,R]\textrm{EXP}\left(w+[F,R]^{T}x\right)[F,R]^{T},\\
\nabla c(x) & = & [F,R]\textrm{EXP}\left(w+[F,R]^{T}x\right)[R,F]^{T},
\end{eqnarray*}
where $\text{EXP}\left(\cdot\right)$ denotes the diagonal matrix
whose entries are the elements in the vector $\exp\left(\cdot\right)$.

Setting $\phi(x):=f_{1}(x)-f_{2}(x)$, the problem of finding a zero
of $f$ is equivalent to the following optimisation problem:
\begin{equation}
\ensuremath{\underset{x\in\mathbb{R}^{m}}{\textrm{minimise}}}\;\phi(x):=f_{1}(x)-f_{2}(x).\label{eq:DC1-1}
\end{equation}
 We now prove that $\phi$ satisfies the \L{}ojasiewicz property.
Denoting $A:=[F,\, R]-[R,\, F]$ and $B:=[F,\, R]^{T}$ we can write
\begin{align*}
\phi(x) & =f(x)^{T}f(x)=\exp\left(w+Bx\right)^{T}A^{T}A\exp\left(w+Bx\right)^{T}\\
 & =\exp\left(w+Bx\right)^{T}Q\exp\left(w+Bx\right)^{T}\\
 &=\sum_{j,k=1}^{2n}q_{j,k}\exp\left(w_{j}+w_{k}+\sum_{i=1}^{m}(b_{ji}+b_{ki})x_{i}\right),
\end{align*}
where $Q=A^{T}A.$ Since~$b_{ij}$ are nonnegative integers for all
$i$ and~$j$, we conclude that the function~$\phi$ is real analytic
(see Proposition~2.2.2 and Proposition~2.2.8 in~\cite{parks1992primer}).
 It follows from Proposition \ref{prop:real_analytic} that the function
$\phi$ satisfies the \L{}ojasiewicz property with some exponent
$\theta\in[0,1)$.

Finally, as in Example~\ref{ex:Finding-zeroes}, for all $\rho>0$,
the function $g(x):=f_{1}(x)+\frac{\rho}{2}\|x\|^{2}$ is twice continuously
differentiable, which implies that $\nabla g$ is locally Lipschitz
continuous. Therefore, either Theorem~\ref{th:convergence_Lowasiewicz}
or Corollary~\ref{cor:convergence_Loj} guarantee the convergence
of the sequence generated by BDCA, as long as the sequence is bounded.
\begin{remark}
In principle, one cannot guarantee the linear convergence of BDCA
applied to biochemical problems for finding steady states. Due to
the \emph{mass conservation assumption}, $\exists l\in\mathbb{R}_{>0}^{m}$
such that $(R-F)^{T}l=0_{n}$. This implies that~$\nabla f(x)$ is
singular for every~$x\in\mathbb{R}^{m}$, because
\[
\nabla f(x)l=[F,R]\textrm{EXP}\left(w+[F,R]^{T}x\right)[F-R,R-F]^{T}l=0_{m}.
\]
Therefore, the reasoning in Example~\ref{ex:Finding-zeroes} cannot
be applied. However, one can still guarantee that any stationary point of
$ \|f(x)\|^{2}$ is actually a steady state  of the considered biochemical reaction network if the function $f$ is strictly duplomonotone~\cite{artacho2015globally}. A function $f:\mathbb{R}^{m}\to\mathbb{R}^{m}$ is called  \emph{duplomonotone} with constant $\bar{\tau}>0$
if
$$
\langle f(x)-f(x-\tau f(x)),f(x)\rangle\geq0\quad\text{whenever }x\in\mathbb{R}^{m},0\leq\tau\leq\bar{\tau},
$$
and  \emph{strictly duplomonotone} if this inequality is strict whenever $f(x)\neq0_m$. If $f$ is differentiable and strictly duplomonotone then
$\nabla f(x)f(x)=0_m$ implies $f(x)=0_m$~\cite{artacho2015globally}. We previously established that some stoichiometric matrices do give rise to strictly duplomonotone functions~\cite{artacho2015globally}, and our numerical experiments, described next, do support the hypothesis that this is a pervasive property of many biochemical networks.
\end{remark}

\section{Numerical Experiments\label{sec:Application-to-biochemistry-1}}

The codes are written in MATLAB and the experiments were performed
in MATLAB version R2014b on a desktop Intel Core i7-4770 CPU @3.40GHz
with 16GB RAM, under Windows 8.1 64-bit. The subproblems~$(\mathcal{P}_{k})$
were approximately solved using the function\texttt{ fminunc }with
{\texttt{optimoptions('fminunc', \hfill 'Algorithm',\hfill 'trust-region', \hfill 'GradObj', \hfill
'on', \hfill 'Hessian', 'on', 'Display', 'off',  'TolFun', 1e-8, 'TolX', 1e-8)}}.

In Table~\ref{table:results} we report the numerical results comparing
DCA and BDCA with quadratic interpolation (Algorithm~3) for 14 models
arising from the study of systems of biochemical reactions. The parameters used were
$\alpha=0.4$, $\beta=0.5$, $\bar{\lambda}=50$ and $\rho=100$. We only
provide the numerical results for Algorithm~3 because it normally
gives better results than Algorithm~2 for biochemical models, as
it is shown in Figure~\ref{fig:comparison_DCA_BDCA_2and3}. In Figure~\ref{fig:Comparison_DCA_BDCA}
we show a comparison of the rate of convergence of DCA and BDCA with
quadratic interpolation for two big models. In principle, a relatively large value of the parameter $\rho$ could slow down the convergence of DCA. This is not the case here: the behaviour of DCA is usually the same for  values of $\rho$ between $0$ and $100$, see Figures~\ref{fig:comparison_DCA_BDCA_2and3} and Figure~\ref{fig:Comparison_DCA_BDCA} (left). In fact, for big models, we observed that  a value of $\rho$ between $50$ and $100$ normally accelerates the convergence of both DCA and BDCA, as shown in Figure~\ref{fig:Comparison_DCA_BDCA} (right). For these reasons, for the numerical results in Table~\ref{table:results}, we applied both DCA and BDCA  to the regularized version $g(x)-h(x)$ with $g(x) =f_1 (x)+\frac{100}{2}\|x\|^2$ and $h(x)=f_2(x)+\frac{100}{2}\|x\|^2$, where $f_1$ and $f_2$ are given by~\eqref{eq:f1andf2}.

\begin{figure}[ht!]
\begin{centering}
\includegraphics[width=0.8\textwidth]{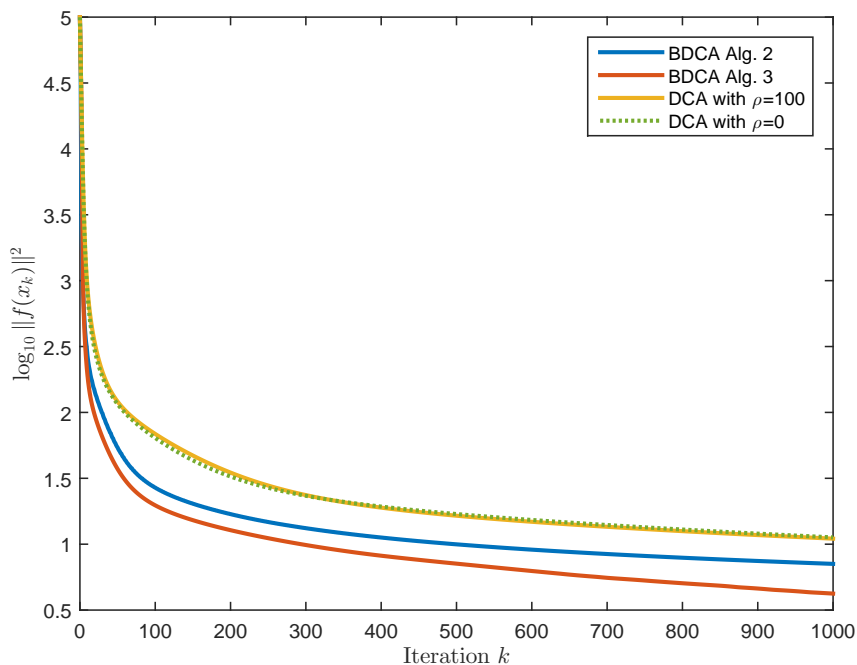}
\par\end{centering}

\protect\caption{Comparison of the rate of convergence of DCA (Algorithm 1)  with $\rho\in\{0,100\}$ and BDCA
(Algorithms 2 and 3) with $\rho=100$ for finding a steady state of the ``Ecoli\_core''
model ($m=72$, $n=94$). \label{fig:comparison_DCA_BDCA_2and3}}
\end{figure}

\begin{figure}[ht!]
\begin{centering}
\includegraphics[width=0.49\textwidth]{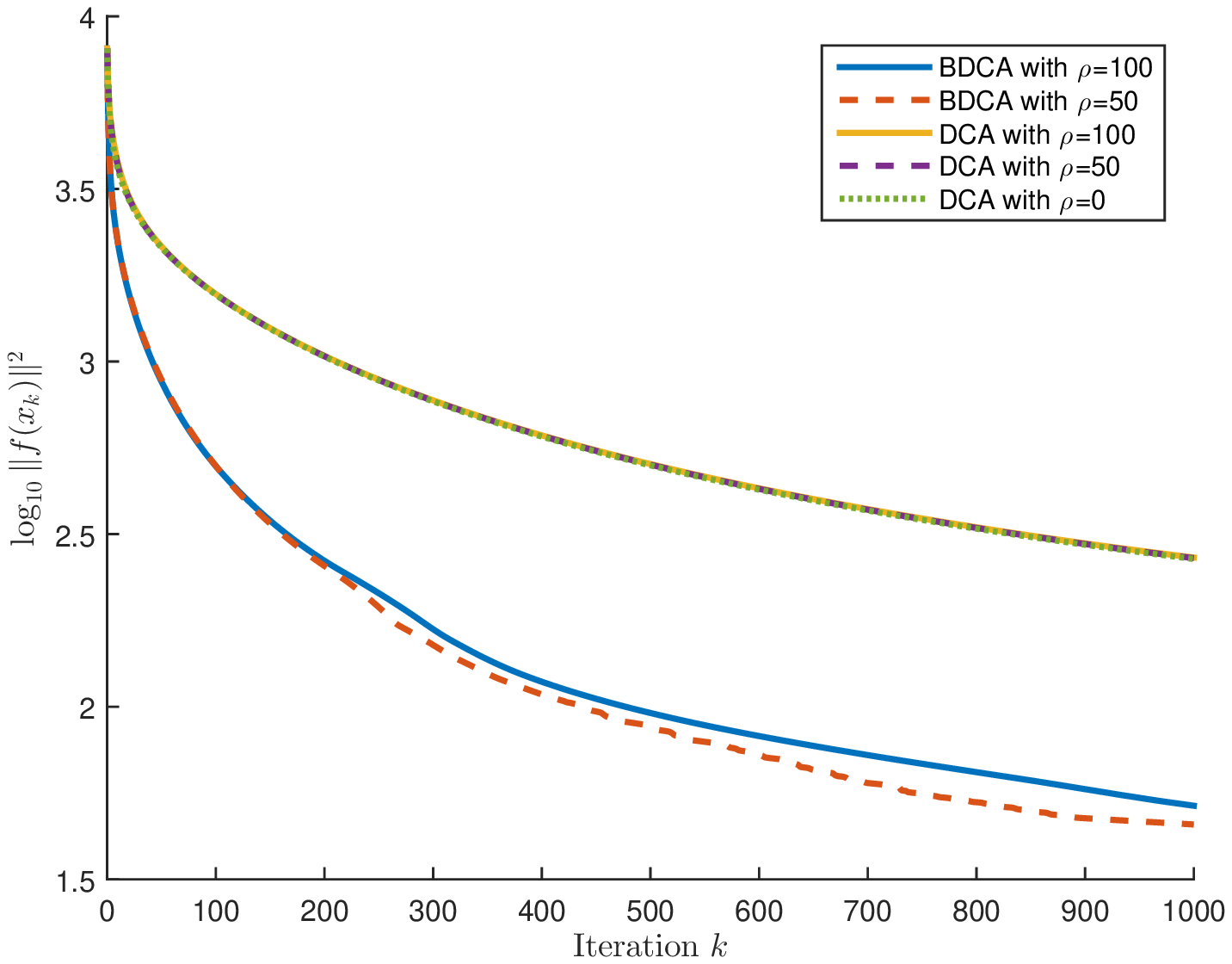}\includegraphics[width=0.49\textwidth]{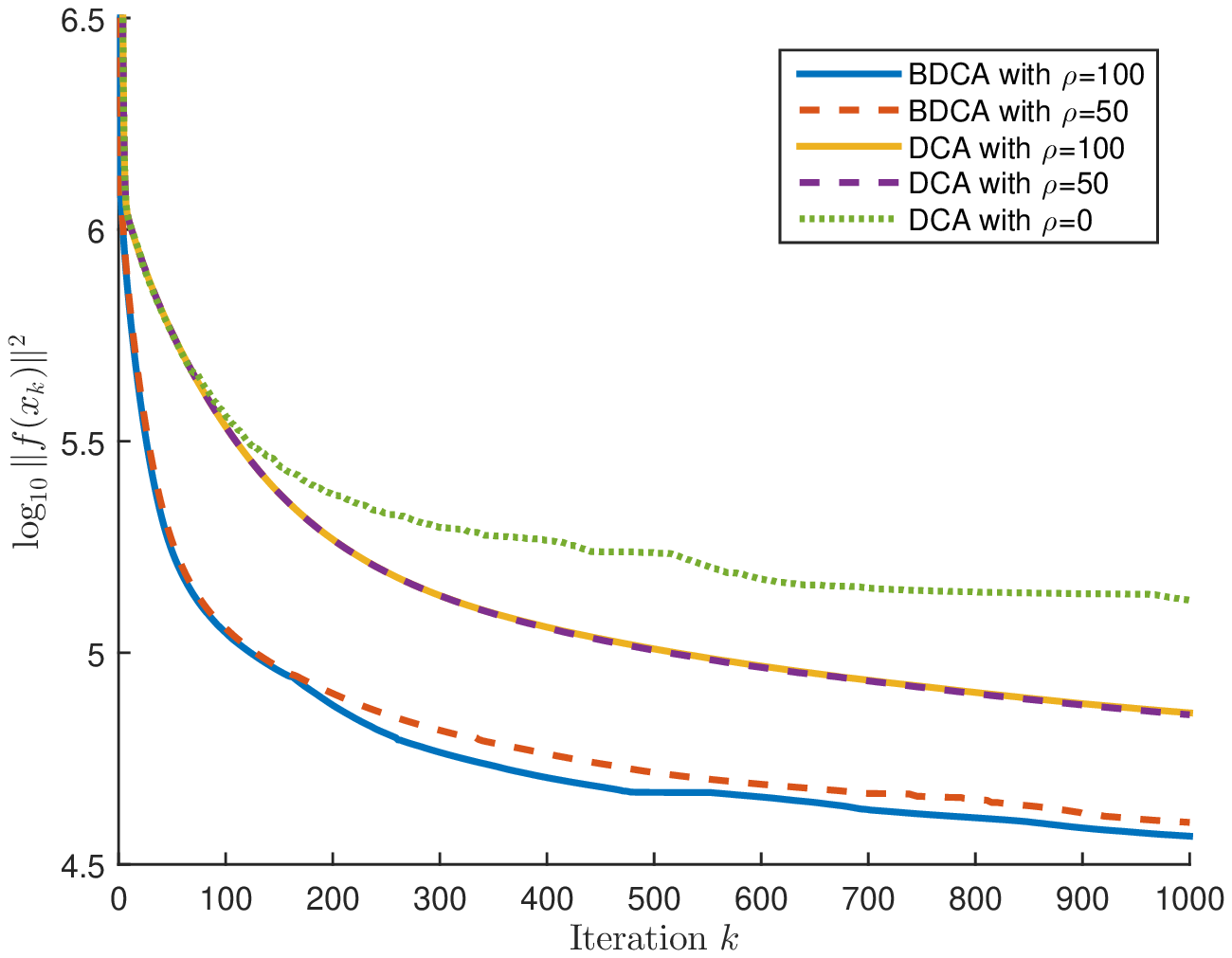}
\par\end{centering}

\protect\caption{Comparison of the rate of convergence of DCA and BDCA (Algorithm~3)
for finding a steady state for different values of the parameter $\rho$. On the left we show the ``iJO1366''
model ($m=1655$, $n=2416$). On the right we show the human metabolism
model ``Recon205\_20150128'' ($m=4085$, $n=7400$)~\cite{Thiele2013}.
\label{fig:Comparison_DCA_BDCA}}
\end{figure}

\section{Concluding Remarks}

In this paper, we introduce two new algorithms for minimising smooth
DC functions, which we term \emph{Boosted Difference of Convex function
Algorithms} (BDCA). Our algorithms combine DCA together with a line
search, which utilises the point generated by DCA to define a search
direction. This direction is also employed by Fukushima--Mine in~\cite{fukushima_generalized_1981},
with the difference that our algorithms start searching for the new
candidate from the point generated by DCA, instead of starting from
the previous iteration. Thus, our main contribution comes from the
observation that this direction is not only a descent direction for
the objective function at the previous iteration, as observed by Fukushima--Mine,
but is also a descent direction at the point defined by DCA. Therefore,
with the slight additional computational effort of a line search one
can achieve a significant decrease in the value of the function.  This result cannot
be directly generalized for nonsmooth functions, as shown in Remark~\ref{rem:nonsmooth}. We
prove that every cluster point of the algorithms are stationary points
of the optimisation problem. Moreover, when the objective function
satisfies the \L{}ojasiewicz property, we prove global convergence
of the algorithms and establish convergence rates.

We demonstrate that the important problem of finding a steady state
in the dynamical modelling of systems of biochemical reactions can
be formulated as an optimisation problem involving a difference of
convex functions. We have performed numerical experiments, using models
of systems of biochemical reactions from various species, in order
to find steady states. The tests clearly show that our algorithm outperforms
DCA, being able to achieve the same decrease in the value of the DC
function while employing substantially less iterations and time. On
average, DCA needed five times more iterations to achieve the same
accuracy as BDCA. Furthermore, our implementation of BDCA was also
more than four times faster than DCA. In fact, the slowest instance
of BDCA was always at least three times faster than DCA. This substantial
increase in the performance of the algorithms is especially relevant
when the typical size of the problems is big, as is the case with
all realistic biochemical network models.

\begin{centering}
\begin{sidewaystable}
\begin{centering}
\footnotesize
\begin{tabular}{|l|r|r||r|r||r|r|r||r|r|r|r|r|r||r|r|}
\hline
\multicolumn{3}{|c||}{{DATA}} & \multicolumn{2}{c||}{{INSTANCES}} & \multicolumn{3}{c||}{{BDCA}} & \multicolumn{6}{c||}{{DCA}} & \multicolumn{2}{c|}{{RATIO (avg.)}}\tabularnewline
\cline{1-14}
\multirow{2}{*}{{Model Name}} & \multirow{2}{*}{{m}} & \multirow{2}{*}{{n}} & {avg.} & {avg.} & \multicolumn{3}{c||}{{time (seconds)}} & \multicolumn{3}{c|}{{iterations}} & \multicolumn{3}{c||}{{time (seconds)}} & \multicolumn{2}{c|}{{DCA/BDCA}}\tabularnewline
\cline{6-16}
 &  &  & {$\phi(x_{0})$} & {$\phi(x_{{\rm end}})$} & {min.} & {max.} & {avg.} & {min.} & {max.} & {avg.} & {min.} & {max.} & {avg.} & {~iter.~} & {time}\tabularnewline
\hline
\hline
{Ecoli core} & {72} & {94} & {5.28e6} & {5.80} & {16.6} & {25.6} & {19.7} & {4101} & {6176} & {4861} & {68.0} & {104.7} & {86.8} & {4.9} & {4.4}\tabularnewline
\hline
{L lactis MG1363} & {486} & {615} & {2.00e7} & {62.73} & {2926.1} & {4029.1} & {3424.4} & {4164} & {6362} & {5241} & {14522.4} & {18212.5} & {16670.1} & {5.2} & {4.9}\tabularnewline
\hline
{Sc thermophilis } & {349} & {444} & {1.95e7} & {84.99} & {290.8} & {552.7} & {358.4} & {4021} & {6303} & {4873} & {1302.1} & {2003.8} & {1611.1} & {4.9} & {4.5}\tabularnewline
\hline
{T Maritima} & {434} & {554} & {3.54e7} & {114.26} & {1333.0} & {2623.3} & {1919.7} & {3536} & {5839} & {4700} & {5476.2} & {12559.2} & {8517.1} & {4.7} & {4.4}\tabularnewline
\hline
{iAF692} & {466} & {546} & {2.32e7} & {57.42} & {1676.8} & {2275.3} & {1967.4} & {4215} & {7069} & {5303} & {8337.0} & {11187.5} & {9466.3} & {5.3} & {4.8}\tabularnewline
\hline
{iAI549} & {307} & {355} & {1.10e7} & {35.90} & {177.2} & {254.4} & {209.2} & {3670} & {5498} & {4859} & {665.1} & {1078.2} & {913.4} & {4.9} & {4.4}\tabularnewline
\hline
{iAN840m} & {549} & {840} & {2.58e7} & {105.18} & {3229.1} & {6939.3} & {4720.6} & {4254} & {5957} & {4971} & {16473.3} & {28956.7} & {21413.2} & {5.0} & {4.5}\tabularnewline
\hline
{iCB925} & {416} & {584} & {1.52e7} & {67.54} & {1830.7} & {2450.5} & {2133.4} & {3847} & {6204} & {5030} & {7358.2} & {11464.6} & {9886.8} & {5.0} & {4.6}\tabularnewline
\hline
{iIT341} & {425} & {504} & {7.23e6} & {139.71} & {1925.2} & {2883.1} & {2301.8} & {3964} & {9794} & {5712} & {9433.8} & {20310.3} & {12262.0} & {5.7} & {5.3}\tabularnewline
\hline
{iJR904} & {597} & {915} & {1.47e7} & {139.63} & {6363.1} & {9836.2} & {7623.0} & {4173} & {5341} & {4776} & {24988.5} & {43639.8} & {33620.6} & {4.4} & {4.8}\tabularnewline
\hline
{iMB745} & {528} & {652} & {2.77e7} & {305.80} & {2629.1} & {5090.7} & {4252.3} & {3986} & {7340} & {5020} & {16437.8} & {25171.6} & {20269.3} & {5.0} & {4.8}\tabularnewline
\hline
{iSB619} & {462} & {598} & {1.64e7} & {40.64} & {2406.7} & {5972.2} & {3323.5} & {2476} & {6064} & {4260} & {8346.1} & {25468.1} & {13966.9} & {4.3} & {4.2}\tabularnewline
\hline
{iTH366} & {587} & {713} & {3.42e7} & {63.37} & {3310.2} & {5707.3} & {4464.2} & {4089} & {6363} & {4965} & {13612.7} & {30044.1} & {20715.5} & {5.0} & {4.6}\tabularnewline
\hline
{iTZ479 v2} & {435} & {560} & {1.97e7} & {78.12} & {1211.4} & {2655.8} & {2216.4} & {3763} & {6181} & {4857} & {7368.1} & {12591.6} & {10119.8} & {4.9} & {4.6}\tabularnewline
\hline
\end{tabular}
\par\end{centering}

\protect\caption{Performance comparison of BDCA and DCA for finding a steady state
of various biochemical reaction network models. For each model, we
selected a random kinetic parameter $w\in[-1,1]^{2n}$ and we randomly
chose 10 initial points~$x_{0}\in[-2,2]^{m}$. For each~$x_{0}$,
BDCA was run 1000 iterations, while DCA was run until it reached the
same value of $\phi(x)$ as obtained with BDCA.\label{table:results}}
\end{sidewaystable}
\end{centering}

\section*{Acknowledgements}

The authors are grateful to the anonymous referee for their pertinent and
constructive comments. The authors also thank Aris Daniilidis for his helpful information
on the \L{}ojasiewicz exponent.\\

\noindent F.J.  Arag\'on Artacho was supported by MINECO of Spain  and  ERDF of EU, as part of the
Ram\'on y Cajal program (RYC-2013-13327) and the grant MTM2014-59179-C2-1-P.
R.M. Fleming and P.T. Vuong were supported by the U.S. Department
of Energy, Offices of Advanced Scientific Computing Research and the
Biological and Environmental Research as part of the Scientific Discovery
Through Advanced Computing program, grant \#DE-SC0010429.

\bibliographystyle{plain}

\begin{thebibliography}{}

\bibitem{absil2005convergence}
Absil, P.A., Mahony, R., Andrews, B.:
\newblock Convergence of the iterates of descent methods for analytic cost
  functions.
\newblock {\em SIAM J. Optimiz.} 16(2), 531--547 (2005)

\bibitem{artacho2015globally}
Arag{\'o}n Artacho, F.J.,  Fleming, Ronan M.T.:
\newblock Globally convergent algorithms for finding zeros of duplomonotone mappings.
\newblock {\em Optim. Lett.} 9(3), 569--584 (2015)

\bibitem{attouch2009convergence}
Attouch, H., Bolte, J.:
\newblock On the convergence of the proximal algorithm for nonsmooth functions involving analytic features.
\newblock {\em Math. Program.} 116(1-2), 5--16 (2009)

\bibitem{bolte2007lojasiewicz}
Bolte, J., Daniilidis, A., Lewis, A.:
\newblock The \L{}ojasiewicz inequality for nonsmooth subanalytic functions with applications to subgradient dynamical systems.
\newblock {\em SIAM J. Optimiz.} 17(4), 1205--1223 (2007)

\bibitem{Bolte2013}
Bolte, J., Sabach, S., Teboulle, M.:
\newblock Proximal alternating linearized minimization for nonconvex and
  nonsmooth problems.
\newblock {\em Math. Program.} 146(1-2), 459--494 (2013)

\bibitem{collobert2006trading}
Collobert, R., Sinz, F., Weston, J., Bottou, L.:
\newblock Trading convexity for scalability.
\newblock {\em Proceedings of the 23rd International Conference on Machine Learning}, pp. 201--208. ACM (2006)

\bibitem{fukushima_generalized_1981}
Fukushima M., Mine H.:
\newblock A generalized proximal point algorithm for certain non-convex
  minimization problems.
\newblock {\em Int. J. Syst. Sci.} 12(8), 989--1000 (1981)

\bibitem{gevorgyan2008detection}
Gevorgyan, A., Poolman, M.G., Fell, D.A.:
\newblock Detection of stoichiometric inconsistencies in biomolecular models.
\newblock {\em Bioinformatics} 24(19), 2245--2251 (2008)

\bibitem{huang2014barzilai}
Huang, Y., Liu, H., Zhou, S.:
\newblock A Barzilai--Borwein type method for stochastic linear complementarity problems.
\newblock {\em Numer. Algorithms} 67(3), 477--489 (2014)

\bibitem{tao2005dc}
Le~Thi, H.A., Pham~Dinh, T.:
\newblock The DC (difference of convex functions) programming and DCA revisited with DC models of real world nonconvex optimization problems.
\newblock {\em Ann. Oper. Res.} 133(1-4), 23--46 (2005)

\bibitem{le2011solving}
Le~Thi, H.A., Pham Dinh, T.:
\newblock On solving linear complementarity problems by DC programming and DCA.
\newblock {\em Comput. Optim. Appl.} 50(3):507--524 (2011)

\bibitem{an_numerical_1996}
Le~Thi, H.A., Pham Dinh, T., Muu, L.D.:
\newblock Numerical solution for optimization over the efficient set by D.C. optimization algorithms.
\newblock {\em Oper. Res. Lett.} 19(3), 117--128 (1996)

\bibitem{le2009convergence}
Le~Thi, H.A., Huynh, V.N., Pham~Dinh, T.:
\newblock Convergence analysis of DC algorithm for DC programming with
  subanalytic data.
\newblock {\em Ann. Oper. Res. Technical Report, LMI, INSA-Rouen} (2009)

\bibitem{Lee2014a}
Lee, J.D., Sun, Y., Saunders, M.A.:
\newblock Proximal Newton type methods for minimizing composite functions.
\newblock {\em SIAM J. Optim.} 24(3), 1420--1443 (2014)

\bibitem{Li2014}
Li, G., Pong, T.K.:
\newblock Douglas-Rachford splitting for nonconvex optimization with
  application to nonconvex feasibility problems.
\newblock {Math. Progr.} 159(1), 371--401 (2016)


\bibitem{lojasiewicz1965ensembles}
\L{}ojasiewicz, S.:
\newblock {\em Ensembles semi-analytiques}.
\newblock Institut des Hautes Etudes
Scientifiques, Bures-sur-Yvette (Seine-et-Oise), France
 (1965)

\bibitem{mine_minimization_1981}
Mine, H., Fukushima, M.:
\newblock A minimization method for the sum of a convex function and a
  continuously differentiable function.
\newblock {\em J. Optimiz. Theory App.} 33(1), 9--23 (1981)

\bibitem{moudafi_proximalDC_2006}
Moudafi, A., Mainge, P.:
\newblock On the convergence of an approximate proximal method for DC
  functions.
\newblock {\em J. Comput. Math.} 24(04), 475--480 (2006)

\bibitem{nesterov2013gradient}
Nesterov, Y.:
\newblock Gradient methods for minimizing composite functions.
\newblock {\em Math. Program.} 140(1):125--161 (2013)

\bibitem{nocedal_numerical_2006}
Nocedal, J., Wright, S.J.:
\newblock {\em Numerical optimization}.
\newblock Springer Series in Operations Research and Financial Engineering.
  Springer, New York, second edition (2006)

\bibitem{parks1992primer}
Parks, H.R., Krantz, S.G.:
\newblock {\em A primer of real analytic functions}.
\newblock Birkh{\"a}user Verlag (1992)

\bibitem{schnorr2007variational}
Schn{\"o}rr, C., Sch{\"u}le, T., Weber, S.:
\newblock Variational reconstruction with DC-programming.
\newblock In {\em Advances in Discrete Tomography and Its Applications}, pp. 227--243. Springer (2007)

\bibitem{tao1998dc}
Pham~Dinh T., Le~Thi, H.A.:
\newblock A DC optimization algorithm for solving the trust-region subproblem.
\newblock {\em SIAM J. Optim.} 8(2), 476--505 (1998)

\bibitem{TaoSouad1986}
Pham~Dinh T., Souad, E.B.:
\newblock Algorithms for solving a class of nonconvex optimization problems.
  Methods of subgradients.
\newblock In J.-B. Hiriart-Urruty, editor, {\em FERMAT Days 85: Mathematics for
  Optimization}, volume 129 of {\em North-Holland Mathematics Studies}, pp.
  249--271. Elsevier (1986)

\bibitem{Thiele2013}
Thiele, I., et al.:
\newblock A community-driven global reconstruction of human metabolism.
\newblock {\em Nat. Biotechnol.} 31(5), 419--425 (2013)

\end{thebibliography}

\end{document}